\newtheorem{theorem}{Theorem}[section]
\newtheorem{proposition}[theorem]{Proposition}
\newtheorem{corollary}[theorem]{Corollary}
\theoremstyle{definition}
\newtheorem{definition}[theorem]{Definition}
\newtheorem{problem}[theorem]{Problem}
\newtheorem{example}[theorem]{Example}
\theoremstyle{remark}
\newtheorem{remark}[theorem]{Remark}
\theoremstyle{theorem}
\title[Nonexistence of twists and surgeries generating exotic 4-manifolds]{Nonexistence of twists and surgeries generating exotic 4-manifolds}
\author[Kouichi Yasui]{Kouichi Yasui}
\date{December 1, 2016. \textit{Revised}: August 29, 2018}
\subjclass[2010]{Primary~57R55, Secondary~57R65, 57R17}
\keywords{4-manifolds; smooth structures; corks; minimal genus functions; Stein 4-manifolds}
\address{Department of Pure and Applied Mathematics, Graduate School of Information Science and Technology, Osaka University, 
1-5 Yamadaoka, Suita, Osaka 565-0871, Japan}
\email{kyasui@ist.osaka-u.ac.jp}
\begin{document}

\begin{abstract} 
It is well known that for any exotic pair of simply connected closed oriented 4-manifolds, one is obtained from the other by twisting a compact contractible submanifold via an involution on the boundary. By contrast, here we show that for each positive integer $n$, there exists a simply connected closed oriented 4-manifold $X$ such that, for any compact (not necessarily connected) codimension zero submanifold $W$ with $b_1(\partial W)<n$, the set of all smooth structures on $X$ cannot be generated from $X$ by twisting $W$ and varying the gluing map. 
As a corollary, we show that there exists no `universal' compact 4-manifold $W$ such that, for any simply connected closed 4-manifold $X$, the set of all smooth structures on $X$ is generated from a 4-manifold by twisting a fixed embedded copy of $W$ and varying the gluing map. Moreover, we give similar results for surgeries. 
\end{abstract}

\maketitle

\section{Introduction}\label{sec:intro}It has been a fundamental problem in 4-dimensional topology to construct all exotic (i.e.\ homeomorphic but not diffeomorphic) smooth structures on a 4-manifold, but there is still no known 4-manifold whose all exotic smooth structures are found. A powerful method for constructing exotic smooth structures is twisting  a compact codimension zero submanifold in a 4-manifold, that is, removing the submanifold and regluing it differently. 
In fact, a well-known theorem states that for any exotic pair of simply connected closed smooth oriented 4-manifolds, one is obtained from the other by twisting a compact contractible codimension zero submanifold via an involution on the boundary (\cite{CFHS}, \cite{M}). A pair of such a contractible 4-manifold and an involution on its boundary is called a cork (\cite{K}, \cite{AY1}), and due to the order of the gluing map, it is sometimes called a cork of order 2 (\cite{T15}). Corks have been actively studied from various point of view, and have many interesting applications (e.g.\ \cite{A2}, \cite{AY3}, \cite{AY5}, \cite{AKMR}, \cite{AR}, \cite{Y7}).  However, since there are infinitely many corks, and many contractible 4-manifolds admit infinitely many embeddings, it is still not known how to produce all exotic smooth structures. 

Recently higher order corks (\cite{T0}, \cite{AKMR2}) and surprisingly infinite order corks (\cite{G16a}, see also \cite{A16}, \cite{G16b}, \cite{T2}) were discovered, but interestingly Tange~\cite{T1} showed that a natural extension of the above cork theorem does not always hold for an infinite exotic family, by showing a certain finiteness for Ozsv\'{a}th-Szab\'{o} invariants of cork twisted 4-manifolds.  
More precisely, he gave infinite families of pairwise exotic closed 4-manifolds such that, for any  4-manifold $X$, any contractible submanifold $W$, and any self-diffeomorphism $f$ of $\partial W$, the families cannot be constructed from $X$ by twisting $W$ via powers of $f$. 

In this paper we discuss generalizations of the cork theorem from viewpoints of twists and more general surgeries. 

\subsection{Nonexistence of twists generating all exotic smooth structures}
We first discuss twists, using the following terminologies. 

\begin{definition}\label{intro:def1}A pair $(X,W)$ consisting of a smooth oriented 4-manifold $X$ and a compact (not necessarily connected) codimension zero submanifold $W$ will be called a \textit{$4$-manifold pair}. 
For a family of smooth oriented 4-manifolds, we say that the family is \textit{generated from $X$ by twisting $W$}, if each member is orientation preserving diffeomorphic to a 4-manifold obtained from $X$ by removing the submanifold $W$ and gluing it back via a (not necessarily orientation preserving) self-diffeomorphism of the boundary $\partial W$. Here the orientations of the resulting 4-manifolds are those induced from the complement of $W$ in $X$. Hence, in the case where the gluing map reverses the orientation, the newly glued piece is the orientation reversal $\overline{W}$ of $W$. 
\end{definition}

For example, it is well known that logarithmic transforms (i.e.\ twists along $T^2\times D^2$) can generate various interesting infinite families of exotic 4-manifolds under a certain condition (cf.\ \cite{GS}, \cite{FS4}, \cite{A_book}). 
For an oriented smooth 4-manifold $X$, let $\mathcal{S}(X)$ be the set of all smooth structures on $X$, that is, $\mathcal{S}(X)$ is the set of all oriented smooth 4-manifolds homeomorphic to $X$ preserving the orientations, considered up to orientation preserving diffeomorphisms. This set was inspired from Tange's \textit{galaxy} (\cite{T2}). As is well-known, $\mathcal{S}(X)$ is a countable set for any compact oriented 4-manifold $X$, since members have at most countably many choices of handle decompositions. We consider the following natural problem. 

\begin{problem}\label{intro:prob:twists}Does a given compact  oriented smooth 4-manifold $X$ admit a compact (not necessarily connected) codimension zero submanifold $W$ such that $\mathcal{S}(X)$ is generated from $X$ by twisting $W$?
\end{problem}

This problem asks a generalization of the cork theorem, since we do not impose any restrictions on the topology of $W$ and on the gluing map. We note that a self-diffeomorphism of $\partial W$ may not preserve the connected components of $\partial W$ when $\partial W$ is disconnected, and hence twists in this problem are more flexible when $W$ is disconnected. If the answer is affirmative, then we have a very useful approach for finding and understanding all exotic smooth structures on a 4-manifold, since they are generated by twisting just a single submanifold in this case. 
However, we give a partial negative answer under a mild assumption on $b_1(\partial W)$.

\begin{theorem}\label{intro:thm:twists}For each positive integer $n$, there exists a compact connected oriented smooth 4-manifold $X$ such that, for any compact $($not necessarily connected$)$ codimension zero submanifold $W$ satisfying $b_1(\partial W)<n$, the set $\mathcal{S}(X)$ cannot be generated from $X$ by twisting $W$. Furthermore, there exist infinitely many pairwise non-homeomorphic such simply connected closed oriented smooth 4-manifolds. 
\end{theorem}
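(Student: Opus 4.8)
The plan is to use the \emph{minimal genus function} $g_Y\colon H_2(Y;\mathbb Z)\to\mathbb Z_{\ge 0}$ (each class sent to the least genus of a smoothly embedded closed surface representing it) as the controlling diffeomorphism invariant. The key structural input is a \emph{localization lemma}: if $Y$ is obtained from $X$ by twisting $W$ with $b_1(\partial W)<n$, then $g_Y$ is changed ``only on an $(n-1)$-dimensional piece of $H_2$''. Concretely, write $Z=\overline{X\setminus W}$ and $Y=Z\cup_\phi W'$ (with $W'$ equal to $W$ or $\overline W$, glued along $\partial W$). Mayer--Vietoris shows the cokernel of $H_2(Z)\oplus H_2(W')\to H_2(Y)$ injects into $H_1(\partial W)$, so its image $L_\phi$ has corank $<n$; and any closed surface in $Y$ representing a class of $L_\phi$ can be made transverse to $\partial W\subset Y$ and cut into surfaces in $Z$ and in $W'$, so, reassembling minimal-genus pieces, $g_Y|_{L_\phi}$ is bounded above by a function built from the relative minimal-genus data of $Z$ and of $W'$. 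The role of the hypothesis $b_1(\partial W)<n$ is to keep that data of bounded complexity --- the boundary multicurves that arise carry classes in the rank-$<n$ group $H_1(\partial W)$ --- so that, after bookkeeping, the bound can be taken to depend only on $(X,W)$, not on the gluing $\phi$.

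To defeat this I would construct $X=X(n)$ as a closed symplectic $4$-manifold (simply connected, obtained by Gompf-type symplectic sums) carrying $n$ disjoint, homologically independent, pairwise-orthogonal square-zero symplectic tori $T_1,\dots,T_n$, each admissible for Fintushel--Stern knot surgery and spanning a direct summand complementary to the ``old'' homology, together with a closed surface $S$ with $S\cdot T_\ell=1$ for all $\ell$; further symplectic sums away from the $T_\ell$ give infinitely many homeomorphism types, each with such a configuration. Let $X_i$ be the result of knot surgery on $T_1,\dots,T_n$ using a fibered knot $K_i$ of genus $i$. Then $X_i$ is homeomorphic to $X$ (same $\pi_1$, intersection form, Kirby--Siebenmann invariant), so $\{X_i\}$ is an infinite subfamily of $\mathcal S(X)$. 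By the knot-surgery formula $SW_{X_i}=SW_X\cdot\prod_\ell\Delta_{K_i}(t_{T_\ell}^2)$, and since the $[T_\ell]$ are ``new'' and independent, every corner $c_1(X)+2i\sum_\ell\varepsilon_\ell[T_\ell]$ with $\varepsilon\in\{\pm1\}^n$ of the Newton polytope is realized without cancellation, hence is a Seiberg--Witten basic class of $X_i$. The adjunction inequality then yields, for every $\alpha\in H_2(X_i)$ with $\alpha^2\ge 0$, the estimate $2g_{X_i}(\alpha)-2\ge\alpha^2+2i\sum_\ell|[T_\ell]\cdot\alpha|-|c_1(X)\cdot\alpha|$; so on the complement of $P:=\langle[T_1],\dots,[T_n]\rangle^{\perp}$, a subgroup of corank exactly $n$, the minimal genus of controlled (non-negative self-intersection, bounded norm) classes grows linearly in $i$.

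Putting the two sides together: if $\mathcal S(X)$ were generated from $X$ by twisting some $W$ with $b_1(\partial W)<n$, each $X_i$ equals $Z\cup_{\phi_i}W'$, so $g_{X_i}$ is bounded on the corank-$<n$ subgroup $L_{\phi_i}$ by a fixed function; but corank $L_{\phi_i}<n=$ corank $P$ forces $L_{\phi_i}\not\subseteq P$, and picking $\gamma_i\in L_{\phi_i}\setminus P$ of controlled self-intersection and norm (using the freedom inside $L_{\phi_i}$ and a large $b^+(X)$) and applying the adjunction bound contradicts the localization bound once $i$ is large. The first, non-closed assertion is handled by a simpler version of the same scheme (realizing $X(n)$ with boundary). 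I expect the main obstacle to be exactly where the hypothesis $b_1(\partial W)<n$ must earn its keep: upgrading the localization bound to one uniform over the infinitely many possible gluings $\phi_i$ \emph{and} over all admissible $W$, and, in tandem, extracting from each $L_{\phi_i}$ a blown-up class whose self-intersection and pairing with $c_1(X)$ are small enough that the linear-in-$i$ adjunction lower bound genuinely overtakes the a priori unbounded localization upper bound. The Seiberg--Witten ingredients (the knot-surgery formula, the adjunction inequality) and the homeomorphism classification (Freedman) are, by contrast, routine.
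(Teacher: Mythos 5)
Your overall strategy (Mayer--Vietoris to show that after a twist the image $L_\phi$ of $H_2(Z)\oplus H_2(W')$ has corank $<n$, knot surgery along $n$ homologically independent tori to produce basic classes with large coefficients in $n$ independent directions, and the adjunction inequality to force minimal genus to grow there) is in spirit the same as the paper's. But the pivotal quantitative step is missing, and you have in fact flagged it yourself as the ``main obstacle'': you need to extract, for every admissible $W$ and every gluing $\phi_i$, a class $\gamma_i\in L_{\phi_i}\setminus P$ whose minimal genus is bounded \emph{above} uniformly in $i$ while its self-intersection and its pairing with $c_1(X)$ stay controlled, so that the linear-in-$i$ adjunction lower bound eventually wins. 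As stated this step would fail: a class in $L_{\phi_i}\setminus P$ need not have small square or small norm, and the upper bound you get from the decomposition is not a bound on $g_{X_i}$ over all of $L_{\phi_i}$ (minimal genus is unbounded on any infinite set of classes); it is a bound only for classes that are images of a \emph{fixed finite set} of classes of $H_2(Z)\oplus H_2(W')$, represented by surfaces contained in the pieces. (Incidentally, your ``cut a surface in $Y$ transverse to $\partial W$'' direction of the localization lemma is the wrong, and unnecessary, direction; the easy inclusion-of-surfaces direction is what one uses.) So the comparison of a class-by-class upper bound with a class-by-class lower bound, over unknown gluings, is exactly the point left unproved.

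The paper closes this gap by replacing single classes with a basis-level statistic, the adjunction $n$-genus: $G_X(n)$ is the minimum over rational bases $\mathbf v$ of $H_2(X;\mathbb Z)$ of the $n$-th largest value of $g^{ad}_X(v)=2g_X(v)-v\cdot v$ on $\mathbf v$. Working with $2g-\alpha\cdot\alpha$ makes the control of self-intersection and of $\langle c_1,\alpha\rangle$ that you worry about unnecessary, since the adjunction inequality directly gives $a_j^{(i)}\le g^{ad}_{X_i}(v)$ for any basis element $v$ pairing nontrivially with the $j$-th primitive direction; and since the $n$ directions are linearly independent, \emph{every} rational basis contains $n$ distinct elements pairing nontrivially with them, so $G_{X_i}(n)\ge\min_j a_j^{(i)}\to\infty$. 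On the other side, for a twist of $W$ with $b_1(\partial W)<n$, fixed rational bases of $H_2(X-\mathrm{int}\,W)$ and $H_2(W)$ yield, after completing by at most $b_1(\partial W)\le n-1$ extra classes, a rational basis of $H_2(X_\lambda)$ whose $n$-th largest adjunction genus is bounded by a constant depending only on the pieces, uniformly in the gluing (and the finitely many orientation choices when $W$ is disconnected). Thus $G_{X_\lambda}(n)$ takes only finitely many values under twisting, contradicting unboundedness, which is exactly the uniform-over-$\phi$ statement your proposal needs but does not establish. Your Seiberg--Witten input (corner basic classes from fibered knots of genus $i$) would serve as well as the paper's degree condition on Alexander polynomials, and the construction of infinitely many homeomorphism types is routine in both approaches; the missing idea is the invariant that converts the corank count into a gluing-independent finiteness statement.
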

Note that $W$ and $\partial W$ are not necessarily connected. This result makes a sharp contrast with the aforementioned cork theorem. We note that the aforementioned Tange's result follows from  this result, since the boundary of any compact contractible 4-manifold is a (connected) homology 3-sphere and thus satisfies $b_1=0$. Our proof is completely different from Tange's one and gives a topologically intuitive understanding. 

To prove this theorem, we introduce the notion of an \textit{adjunction $n$-genus}, which is a diffeomorphism invariant of 4-manifolds determined by the minimal genus function. 
We moreover give a sufficient condition that an infinite family of pairwise exotic (not necessarily closed) 4-manifolds cannot be generated by twisting, by using the adjunction $n$-genera. This sufficient condition gives various examples of the above theorem including many well-known 4-manifolds, and we here list a few simple ones in the $n=1$ case. 

\begin{example}\label{intro:ex}$(1)$ \textit{Elliptic surfaces}. Every elliptic surface $E(k)$ $(k>1)$ without multiple fibers is an example of the $n=1$ case of Theorem~\ref{intro:thm:twists}. In fact we will show that, for any infinite family of homeomorphic but pairwise non-diffeomorphic simply connected closed elliptic surfaces with $b_2^+>1$, the family does not admit a 4-manifold pair $(X,W)$ with $b_1(\partial W)<1$ such that the family is generated from $X$ by twisting $W$. \\
$(2)$ \textit{Knot surgery}. Suppose that a simply connected closed oriented smooth 4-manifold $X$ with $b_2^+>1$ and a non-trivial Seiberg-Witten invariant contains a homologically non-trivial torus $T$ of self-intersection $0$ satisfying $\pi_1(X-T)\cong 1$. Then $X$ is an example of the $n=1$ case of Theorem~\ref{intro:thm:twists}. Indeed, for any infinite family $\{K_i\}$ of knots in $S^3$ whose Alexander polynomials have pairwise distinct degrees, we show that the infinite family $\{X_{K_i}\}$ of homeomorphic but pairwise non-diffeomorphic simply connected closed 4-manifolds does not admit a 4-manifold pair $(Z,W)$ with $b_1(\partial W)<1$ such that the family is generated from $Z$ by twisting $W$. Here each $X_{K_i}$ denotes the 4-manifold obtained from $X$ by performing Fintushel-Stern's knot surgery~\cite{FS2} on $T$ using $K_i$. \\
$(3)$ \textit{Small Stein 4-manifolds}. We can also provide non-closed examples. For example, we consider infinite families of homeomorphic but pairwise non-diffeomorphic simply connected compact Stein 4-manifolds with $b_2=2$ obtained by Akbulut and the author \cite{AY6} and the author \cite{Y6}. We show that any family does not admit a 4-manifold pair $(X,W)$ with $b_1(\partial W)<1$ such that the family is generated from $X$ by twisting $W$. 
\end{example}

Due to the cork theorem and the existence of infinite order corks, one might hope for the existence of a `universal' 4-manifold $W$ such that, for any compact 4-manifold $X$, the set of all smooth structures on $X$ is generated from a 4-manifold by twisting a fixed embedded copy of $W$. However, the above theorem dashes this hope.  

\begin{corollary}\label{intro:cor:twist:universal}There exists no compact $($not necessarily connected$)$ oriented smooth 4-manifold $W$ such that for any simply connected oriented closed smooth 4-manifold $X$, the set $\mathcal{S}(X)$ is generated from a smooth oriented 4-manifold by twisting a fixed embedded copy of $W$. 
\end{corollary}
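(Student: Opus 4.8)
The plan is to deduce the corollary from Theorem~\ref{intro:thm:twists} by a pigeonhole argument on $b_1$ of the boundary. Suppose, for contradiction, that a universal $W$ as in the statement exists, and put $m:=b_1(\partial W)$. Applying Theorem~\ref{intro:thm:twists} with $n:=m+1$, we obtain a simply connected closed oriented smooth 4-manifold $X$ with the property that, for every compact (not necessarily connected) codimension zero submanifold $W'\subset X$ with $b_1(\partial W')<m+1$, the set $\mathcal{S}(X)$ cannot be generated from $X$ by twisting $W'$. The goal is to contradict this using the universality of $W$ for precisely this $X$.

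First I would unwind the universality hypothesis for this $X$: it provides a smooth oriented 4-manifold $Y$ together with an embedded copy of $W$ in $Y$ such that $\mathcal{S}(X)$ is generated from $Y$ by twisting that copy. Since $X$ itself belongs to $\mathcal{S}(X)$, some twist of $Y$ along $W$ is orientation-preserving diffeomorphic to $X$; write $\psi\colon (Y\setminus\operatorname{int}W)\cup_{f}W'\to X$ for such a diffeomorphism, where $W'$ is $W$ or $\overline{W}$ according to whether the gluing map $f$ preserves or reverses orientation. Setting $W_0:=\psi(W')\subset X$, we get $X\setminus\operatorname{int}W_0\cong Y\setminus\operatorname{int}W$, and every manifold obtained from $X$ by twisting $W_0$ is orientation-preserving diffeomorphic to one obtained from $Y$ by twisting $W$, and conversely. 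Hence $\mathcal{S}(X)$ is generated from $X$ by twisting the single submanifold $W_0$.

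It remains only to note that $b_1(\partial W_0)=b_1(\partial W')=b_1(\partial W)=m<m+1$, because reversing the orientation of a $3$-manifold does not change its first Betti number. Thus $W_0$ is a compact codimension zero submanifold of $X$ with $b_1(\partial W_0)<m+1$ through which $\mathcal{S}(X)$ is generated, contradicting the defining property of $X$. Therefore no universal $W$ exists. The only delicate point --- and the main, though mild, obstacle --- is the bookkeeping in the middle paragraph: keeping track of the possibly orientation-reversing gluing map so as to recognize the reglued piece as an honest embedded copy of $W$ or of $\overline{W}$ inside $X$, and verifying that twisting $X$ along $W_0$ realizes exactly the same family of $4$-manifolds as twisting $Y$ along $W$. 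Everything else is formal.
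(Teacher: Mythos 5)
Your proof is correct and is exactly the intended derivation: the paper leaves Corollary~\ref{intro:cor:twist:universal} as an immediate consequence of Theorem~\ref{intro:thm:twists}, obtained by taking $n=b_1(\partial W)+1$ and transporting the embedded copy of $W$ (or $\overline{W}$) into the resulting $X$ via the diffeomorphism realizing $X$ as a twist of the ambient manifold. Your bookkeeping of the possibly orientation-reversing gluing and the identification of the two twist families is the right way to fill in the details.
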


\subsection{Nonexistence of surgeries generating all exotic smooth structures}
Next we discuss surgeries using the following terminology. 

\begin{definition} For a 4-manifold pair $(X,W)$ and a family of smooth oriented 4-manifolds, 
we say that the family is \textit{generated from $X$ by performing surgeries on $W$}, if each member is obtained from $X$ by removing the submanifold $W$ and gluing a compact oriented smooth 4-manifold whose boundary is orientation preserving diffeomorphic to $\partial W$. Note that we do not fix the newly glued piece.
\end{definition}

Clearly surgeries can produce many more 4-manifolds than twists. For example, one can replace submanifolds with their exotic copies. Since surgeries can produce various infinite exotic families (cf.\ \cite{GS}, \cite{FS4}, \cite{A_book}), it would be natural to consider a surgery version of Problem~\ref{intro:prob:twists}. 

\begin{problem}\label{intro:prob:surgery}
Does a given compact  oriented smooth 4-manifold $X$ with $b_2>0$ admit a compact $($not necessarily connected$)$ codimension zero submanifold $W$ with $b_2(W)<b_2(X)$ such that $\mathcal{S}(X)$ is generated from $X$ by performing surgeries on $W$? 
\end{problem}

The conditions $b_2(X)>0$ and $b_2(W)<b_2(X)$ might look unnatural, but we impose this restriction to avoid a trivial affirmative answer. Indeed, for any compact codimension zero submanifold $V$ of the 4-ball, it is easy to see that $W=X-\text{int}\, V$ provides an affirmative answer, realizing any integer not less than $b_2(X)$ as $b_2(W)$. 

Here we show that this problem also has a partial negative answer under a mild assumption on $b_2(W)+3b_1(\partial W)$, even though surgeries are much more flexible than twists.  

\begin{theorem}\label{intro:thm:surgery}For each positive integer $n$, there exists a compact connected oriented smooth 4-manifold $X$ with $b_2>n$ such that, for any compact $($not necessarily connected$)$ codimension zero submanifold $W$ with $b_2(W)+3b_1(\partial W)<n$, the set $\mathcal{S}(X)$ cannot be generated from $X$ by performing surgeries on $W$. Furthermore, there exist infinitely many pairwise non-homeomorphic such simply connected closed oriented smooth 4-manifolds. 
\end{theorem}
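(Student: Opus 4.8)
The plan is to adapt the strategy already outlined for Theorem~\ref{intro:thm:twists} to the surgery setting, the key new point being that a surgery on $W$, unlike a twist, changes the second Betti number of the ambient manifold in a controlled way. First I would recall the machinery of the adjunction $n$-genus: this is a diffeomorphism invariant built from the minimal genus function, and the earlier sufficient condition shows that an infinite family of pairwise exotic 4-manifolds with suitably growing adjunction $n$-genera cannot all be obtained from a single $X$ by twisting a fixed $W$ when $b_1(\partial W)<n$. The task now is to show that the same obstruction survives when we allow arbitrary surgeries on $W$ (replacing $W$ by any $W'$ with $\partial W'\cong\partial W$), at the cost of weakening the hypothesis from $b_1(\partial W)<n$ to $b_2(W)+3b_1(\partial W)<n$.

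The main steps I would carry out are as follows. Fix the infinite family $\{X_{K_i}\}$ (or elliptic surfaces $E(k_i)$, etc.) used in Example~\ref{intro:ex}, whose adjunction $1$-genera — and, after taking fibre sums or otherwise stabilizing, adjunction $n$-genera — grow without bound; these are simply connected closed with large $b_2$. Suppose for contradiction that $\mathcal{S}(X)$, hence in particular infinitely many members $X_{K_{i_1}},X_{K_{i_2}},\dots$, is generated from a single $(X,W)$ by surgeries on $W$, with $b_2(W)+3b_1(\partial W)<n$. Since $X_{K_{i_j}} = (X-\operatorname{int}W)\cup W_j$ with $\partial W_j\cong\partial W$, the complement $C:=X-\operatorname{int}W$ is fixed, and by a Mayer--Vietoris / half-lives-half-dies argument the rank of $H_2(C)$ that can ``survive'' into each $X_{K_{i_j}}$ is bounded in terms of $b_2(W_j)$ and $b_1(\partial W)$; the point is to bound $b_2(W_j)$ itself. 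Here I would use that each $X_{K_{i_j}}$ is homeomorphic to the same manifold, so $b_2(W_j)=b_2(X)-b_2(C)+ (\text{correction from }b_1(\partial W))$ is the same for all $j$, and this common value is controlled by $b_2(W)$ via the surgery hypothesis — roughly, $b_2(W_j)\le b_2(W)+2b_1(\partial W)$ by elementary homology of the pair. Combining, a fixed-size piece $C$ together with pieces $W_j$ of bounded $b_2$ is being glued along a fixed $3$-manifold of bounded $b_1$ to produce manifolds of unbounded adjunction $n$-genus, which contradicts the subadditivity/monotonicity of the adjunction $n$-genus under such decompositions (the same inequality that drives the twist case, now applied with the surgery-adjusted bound $b_2(W)+3b_1(\partial W)$ in place of $b_1(\partial W)$). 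Finally, the ``infinitely many pairwise non-homeomorphic'' clause follows by running the argument on a sequence of base manifolds with strictly increasing $b_2^+$, e.g.\ fibre sums $E(k)\#_F E(k)$ for varying $k$, exactly as in Theorem~\ref{intro:thm:twists}.

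The hard part will be pinning down the precise form of the inequality $b_2(W_j)\le b_2(W)+2b_1(\partial W)$ and, more importantly, tracking how the adjunction $n$-genus of $X_{K_{i_j}}$ bounds below a quantity attached to the fixed complement $C$ that is \emph{independent} of the glued piece $W_j$ — this is what forces the contradiction, and it is where the combination $b_2(W)+3b_1(\partial W)$ (one factor $b_1(\partial W)$ for the rim tori / kernel of $H_2(\partial W)\to H_2(W_j)$, one for the cokernel, and one more absorbed into converting minimal genus data across the gluing) must be justified carefully. The surgery flexibility means $W_j$ can be wild, so I expect the estimate must be phrased entirely in terms of the algebraic topology of the cobordism $(W_j,\partial W)$ and not its geometry; once that bookkeeping lemma is in place, the rest is a direct transcription of the proof of Theorem~\ref{intro:thm:twists} with the sufficient condition on adjunction $n$-genera applied verbatim.
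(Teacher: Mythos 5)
Your obstruction half is essentially the paper's argument. The paper proves exactly the finiteness you describe (Proposition~\ref{prop:genus:surgery:general} and Theorem~\ref{thm:genus:surgery:general}): from the Mayer--Vietoris sequence for $X_\lambda=(X-\textnormal{int}\,W)\cup W_\lambda$ one gets $b_2(W_\lambda)\le b_2(W)+2b_1(\partial W)$ (here $m=b_2(X)$ since the family is $\mathcal{S}(X)$ and surgery is performed on $X$ itself) and at most $b_1(\partial W)$ rational basis vectors of $H_2(X_\lambda;\mathbb{Q})$ not supported in the two pieces; hence when $n>b_2(W)+3b_1(\partial W)$ one can choose a rational basis of $H_2(X_\lambda;\mathbb{Q})$ in which fewer than $n$ elements fail to come from the fixed complement, so $G_{X_\lambda}(n)\le G_{X-\textnormal{int}\,W,\mathbf{v},1}$, a bound independent of $\lambda$, contradicting unbounded adjunction $n$-genera. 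Your bound $b_2(W_j)\le b_2(W)+2b_1(\partial W)$ is the right one; only your accounting of the third $b_1(\partial W)$ is off --- it comes from the purely homological inequality $b_2(W)+b_2(X-\textnormal{int}\,W)\ge b_2(X)-b_1(\partial W)$ for the original decomposition of $X$, not from ``converting minimal genus data across the gluing.''

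The genuine gap is in the example half for $n\ge 2$. Example~\ref{intro:ex} only yields families with pairwise distinct adjunction $1$-genera: knot surgery on a single torus produces basic classes that are multiples of essentially one primitive direction, so it says nothing about $G(n)$ for $n\ge 2$, and your proposed fix by ``fibre sums or otherwise stabilizing'' does not obviously repair this --- a fibre sum along the surgery torus identifies the torus classes and still leaves only one growing direction, so the resulting basic classes need not be $n$-nicely inequivalent. What is actually needed (and what the paper does in Theorem~\ref{sec:example:thm:example} via Subsection~\ref{sec:ex:subsec:general_knot}) is knot surgery with the same knot $K_i$ on $n$ disjoint, homologically independent tori, e.g.\ regular fibers of $n$ disjoint Gompf nuclei $N(2)$ inside $E(n+1)$; the knot surgery formula then provides subsets $S_i\subset\mathcal{B}_{X_i}$ of the form $\{\pm a_iK_1^{(i)}\pm\cdots\pm a_iK_n^{(i)}\}$ with the $K_j^{(i)}$ primitive and linearly independent and $a_i$ strictly increasing, and applying the adjunction inequality to $n$ dual homology classes (Theorem~\ref{thm:estimate:SW}) forces $G_{X_i}(n)\to\infty$. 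Without such a construction your contradiction has nothing to contradict when $n\ge 2$; with it, the rest of your plan, including varying the base (e.g.\ $E(k)$ for varying $k$) to get infinitely many homeomorphism types, goes through exactly as in the paper.
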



It would be worth to remark that the above $X$ can be chosen so that it satisfies the condition of Theorem~\ref{intro:thm:twists}. We also give a sufficient condition that an infinite family of pairwise exotic (not necessarily closed) 4-manifolds cannot be generated from a 4-manifold by performing surgeries, by using the adjunction $n$-genera. Similarly to the case of twists, this sufficient condition gives various examples including many well-known 4-manifolds. For example, the 4-manifolds in Example~\ref{intro:ex} are examples of the $n=1$ case of this theorem as well. 

Similarly to the case of twists, Theorem~\ref{intro:thm:surgery} shows the nonexistence of a `universal' 4-manifold even for surgeries. 

\begin{corollary}\label{intro:cor:surgery:universal}There exists no compact oriented smooth 4-manifold $W$ such that for any simply connected closed oriented smooth 4-manifold $X$, the set $\mathcal{S}(X)$ is generated from $X$ by performing surgeries on a fixed embedded copy of $W$.  
\end{corollary}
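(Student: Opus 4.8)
The plan is to deduce the corollary directly from Theorem~\ref{intro:thm:surgery} by a pigeonhole-type argument on the quantity $b_2(W)+3b_1(\partial W)$. I would argue by contradiction: suppose there existed a compact oriented smooth 4-manifold $W$ that is universal in the stated sense. Set $n:=b_2(W)+3b_1(\partial W)+1$, a positive integer. Then Theorem~\ref{intro:thm:surgery} furnishes a simply connected closed oriented smooth 4-manifold $X$ with $b_2(X)>n$ such that, for every compact (not necessarily connected) codimension zero submanifold $W'\subset X$ satisfying $b_2(W')+3b_1(\partial W')<n$, the set $\mathcal{S}(X)$ is \emph{not} generated from $X$ by performing surgeries on $W'$.

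Next I would apply the universality hypothesis to this particular $X$. If $W$ admits no embedding into $X$ as a codimension zero submanifold, then the hypothesis already fails for $X$ and we are done; so we may assume there is an embedded copy $W'\subset X$ that is orientation-preserving diffeomorphic to $W$, and, by hypothesis, $\mathcal{S}(X)$ is generated from $X$ by performing surgeries on $W'$. Since $b_2$ and $b_1(\partial\,\cdot\,)$ are diffeomorphism invariants, $b_2(W')+3b_1(\partial W')=b_2(W)+3b_1(\partial W)=n-1<n$, which contradicts the conclusion of Theorem~\ref{intro:thm:surgery} for $X$. Hence no universal $W$ exists. (The analogous Corollary~\ref{intro:cor:twist:universal} is obtained in exactly the same way from Theorem~\ref{intro:thm:twists}, taking $n:=b_1(\partial W)+1$.)

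I do not expect any genuine obstacle in this step: all the substantive content is packaged into Theorem~\ref{intro:thm:surgery}, and the corollary is a formal consequence. The only points needing (routine) attention are the invariance of the relevant Betti numbers under the chosen embedding, and the trivial remark that a $W$ that fails to embed in some required $X$ is a fortiori not universal. One may also note in passing that the extra restriction $b_2(W')<b_2(X)$ appearing in Problem~\ref{intro:prob:surgery} is automatic in the situation above, since $b_2(W')\le b_2(W')+3b_1(\partial W')<n<b_2(X)$.
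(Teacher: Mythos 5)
Your argument is correct and is exactly the (implicit) deduction the paper intends: the corollary is stated as an immediate consequence of Theorem~\ref{intro:thm:surgery}, obtained by choosing $n=b_2(W)+3b_1(\partial W)+1$ and noting that an embedded copy of $W$ in the resulting $X$ would violate the theorem, while failure to embed already contradicts universality. No issues.
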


\subsection{Nonexistence of twists generating all exotic smooth structures by varying embeddings}
Finally, we discuss a further generalization of the cork theorem.  

\begin{problem}\label{intro:prob:embedding}Does a given compact oriented smooth 4-manifold $X$ admit a compact $($not necessarily connected$)$ oriented smooth 4-manifold $W$ such that $\mathcal{S}(X)$ is generated from $X$ by twisting an embedded copy of $W$ and varying the embedding of $W$ into $X$?
\end{problem}

The construction in this problem is much more flexible than the one in Problem~\ref{intro:prob:twists}, since we vary an embedding of $W$. A related problem was studied by Akbulut and the author in \cite{AY3} and \cite{AY4}, and it was shown that many order-2 corks can produce infinite families of pairwise exotic simply connected closed 4-manifolds by twisting the corks via the involutions and varying their embeddings, though they can produce only exotic pairs in the case where their embeddings are fixed. By contrast, we give a partial negative answer to this problem.
  
\begin{theorem}\label{intro:thm:embedding:closed}For each positive integer $n$, there exists a simply connected closed oriented smooth 4-manifold $X$ with $b_2=12n+10$ such that for any compact oriented smooth 4-manifold $W$ with $b_2(W)-4b_1(\partial W)>11n+10$, the set $\mathcal{S}(X)$ cannot be generated from $X$ by twisting an embedded copy of $W$ and varying the embedding of $W$ into $X$. 
\end{theorem}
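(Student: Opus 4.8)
The plan is to reduce Theorem~\ref{intro:thm:embedding:closed} to the surgery form of the adjunction $n$-genus obstruction already developed in the proof of Theorem~\ref{intro:thm:surgery}, via the observation that, when $b_2(W)$ is large, twisting an embedded copy of $W$ and varying the embedding is \emph{the same as} performing surgeries on the (necessarily homologically small) complement of $W$. So the content is: (i) choose $X$ whose smooth structure set is ``complicated'' in the adjunction $n$-genus sense; (ii) show that any admissible $W$ has a homologically small complement in $X$; (iii) re-express the construction as surgeries on that small complement and quote the surgery obstruction.

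\emph{Choice of $X$ and the relevant family.} I would take $X=E(n+1)$, the relatively minimal simply connected elliptic surface with no multiple fibers, so that $b_2(X)=12(n+1)-2=12n+10$ and $b_2^+(X)=2n+1>1$. By the analysis underlying Example~\ref{intro:ex} and Theorem~\ref{intro:thm:surgery}, $\mathcal{S}(X)$ contains an infinite family $\mathcal{F}=\{X_i\}_{i\ge1}$ of pairwise non-diffeomorphic (hence pairwise exotic) closed $4$-manifolds with $\gamma_n(X_i)\to\infty$, where $\gamma_n$ denotes the adjunction $n$-genus; concretely one may use iterated Fintushel--Stern knot surgeries along disjoint cusp neighbourhoods with growing Alexander polynomial degrees, and one checks the hypotheses of the surgery obstruction for this family.

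\emph{The complement of $W$ is homologically small, and the reversal.} Let $W\hookrightarrow X$ be any compact oriented codimension-zero submanifold with $b_2(W)-4b_1(\partial W)>11n+10$, and set $C=X-\operatorname{int}W$, so $\partial C\cong-\partial W$ and $b_1(\partial C)=b_1(\partial W)$. The Mayer--Vietoris sequence of $X=C\cup_{\partial W}W$, together with $b_2(\partial W)=b_1(\partial W)$, gives $b_2(X)\ge b_2(C)+b_2(W)-b_1(\partial W)$, hence
\[
b_2(C)+3b_1(\partial C)\ \le\ b_2(X)-b_2(W)+4b_1(\partial W)\ \le\ (12n+10)-(11n+11)\ =\ n-1 ,
\]
so $b_2(C)+3b_1(\partial C)<n$. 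I would then assume, for contradiction, that $\mathcal{S}(X)$ is generated from $X$ by twisting an embedded copy of $W$ and varying the embedding. For each $X_i\in\mathcal{F}$ there are an embedding $e_i\colon W\hookrightarrow X$ with complement $C_i:=X-\operatorname{int}e_i(W)$ and a self-diffeomorphism $f_i$ of $\partial W$ with $X_i\cong C_i\cup_{f_i}W_i$, where $W_i$ is $W$ or $\overline W$; passing to an infinite subfamily I may assume $W_i=W$ for all $i$ (the case $\overline W$ is verbatim the same). Fix an index $i_0$. Since $X_{i_0}=C_{i_0}\cup_{f_{i_0}}W$ contains $C_{i_0}$ as a codimension-zero submanifold with $X_{i_0}-\operatorname{int}C_{i_0}\cong W$, and since every $X_i=C_i\cup_{f_i}W$ is obtained from $X_{i_0}$ by removing $C_{i_0}$ and gluing in $C_i$ (whose boundary is orientation-preserving diffeomorphic to $\partial C_{i_0}\cong-\partial W$), the infinite family $\{X_i\}$ is generated from $X_{i_0}$ by performing surgeries on the submanifold $C_{i_0}$. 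But $b_2(C_{i_0})+3b_1(\partial C_{i_0})<n$ while $\gamma_n(X_i)\to\infty$, which contradicts the surgery form of the non-generation criterion behind Theorem~\ref{intro:thm:surgery}. Hence $\mathcal{S}(X)$ cannot be so generated.

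The only genuinely new ingredient is the reversal in the last step: twisting the large submanifold $W$ may change much of the intersection form of $X$, yet its complement is pinned to be homologically small and is the piece common to all the resulting $4$-manifolds, so the whole construction can be re-read as surgeries on the \emph{fixed} small submanifold $C_{i_0}$, which the surgery obstruction already rules out. I expect the only real work, besides invoking that obstruction, to be careful bookkeeping of orientations of $\partial W$ and of the two cases $W_i\in\{W,\overline W\}$, together with verifying that $\mathcal{F}$ meets the hypotheses of the surgery criterion (pairwise exotic, $b_2^+>1$, unbounded $\gamma_n$); note that the coefficients are exactly calibrated so that $b_2(W)-4b_1(\partial W)>11n+10$ forces the ``$<n$'' above, and $12n+10$ is forced by $b_2(E(n+1))$.
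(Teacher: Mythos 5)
Your proposal is correct and takes essentially the same approach as the paper: the paper's proof combines the knot-surgery family on $E(n+1)$ along $n$ disjoint nuclei (Subsection~\ref{sec:ex:subsec:general_knot}) with Theorem~\ref{thm:genus:twist:embedding}, whose proof is precisely your reduction of varying-embedding twists of a homologically large $W$ to surgeries on the homologically small complement, using the same Mayer--Vietoris estimate and the same numerics ($m=12n+10$, threshold $11n+10$). The only (harmless) variation is that you take a member $X_{i_0}$ of the family as the base and pass to a subfamily with a fixed reglued piece before invoking Theorem~\ref{thm:genus:surgery:general}, whereas the paper performs the surgeries on $X-\textnormal{int}\,W_0$ inside $X$ itself for a fixed embedded copy $W_0$ of $W$.
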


Actually it is possible to decrease the lower bound for $b_2(W)-4b_1(\partial W)$ by improving the construction of $X$, but we do not pursue this point in the case where $X$ is closed. In the case where $X$ is a 4-manifold having boundary, we can significantly decrease the lower bound. 

\begin{theorem}\label{intro:thm:embedding:boundary}For each positive integer $n$, there exists a compact connected oriented smooth 4-manifold $X$ with $b_2=2n$ such that for any compact oriented smooth 4-manifold $W$ with $b_2(W)-4b_1(\partial W)>n$, the set $\mathcal{S}(X)$ cannot be generated from $X$ by twisting an embedded copy of $W$ and varying the embedding of $W$ into $X$. 
\end{theorem}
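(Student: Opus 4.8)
The plan is to reduce Theorem~\ref{intro:thm:embedding:boundary} to a Mayer--Vietoris estimate together with the cut-and-paste behaviour of the adjunction $n$-genus, and to build $X$ by boundary-connected-summing $n$ copies of a small exotic Stein piece. Write $\gamma_n$ for the adjunction $n$-genus. I would fix a compact simply connected Stein $4$-manifold $X_0$ with $b_2(X_0)=2$ carrying an infinite family $\{X_{0,j}\}_{j\ge 1}$ of pairwise exotic copies, all homeomorphic to $X_0$ and pairwise non-diffeomorphic, in which some homology class has minimal genus forced to grow with $j$ by an adjunction-type inequality, as produced in \cite{AY6}, \cite{Y6} and recalled in Example~\ref{intro:ex}(3). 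Since a Stein domain has connected boundary, the boundary-connected-sum $X:=\natural^{\,n}X_0$ is well defined and $b_2(X)=2n$; replacing summands by homeomorphic copies shows that $Y_j:=\natural^{\,n}X_{0,j}$ is homeomorphic to $X$, so $Y_j\in\mathcal{S}(X)$. Because the minimal genus function of a boundary-connected-sum detects each summand, one gets $\gamma_n(Y_j)\to\infty$, and in particular the $Y_j$ are pairwise non-diffeomorphic.

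Assume, for contradiction, that $\mathcal{S}(X)$ is generated from $X$ by twisting an embedded copy of $W$ and varying the embedding, where $b_2(W)-4b_1(\partial W)>n$. Then for each $j$ there are an embedding $\iota_j\colon W\hookrightarrow X$, a sign $\epsilon_j\in\{\pm1\}$ and a self-diffeomorphism $\phi_j$ of $\partial W$ with $Y_j\cong V_j\cup_{\phi_j}(\epsilon_jW)$, where $V_j:=X-\operatorname{int}\iota_j(W)$. Applying the Mayer--Vietoris sequence of $X=V_j\cup_{\partial W}\iota_j(W)$ with rational coefficients, and using $b_2(\partial W)=b_1(\partial W)$ for the closed oriented $3$-manifold $\partial W$, one obtains
\[
b_2(V_j)\ \le\ b_2(X)-b_2(W)+b_1(\partial W)\ =\ 2n-b_2(W)+b_1(\partial W),
\]
hence $b_2(V_j)+3b_1(\partial W)\le 2n-\bigl(b_2(W)-4b_1(\partial W)\bigr)<n$. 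Thus every $Y_j$ is obtained by gluing the \emph{fixed} $4$-manifold $\epsilon_jW$ onto a $4$-manifold $V_j$ satisfying $b_2(V_j)+3b_1(\partial W)<n$, with $b_1(\partial V_j)$ bounded in terms of $b_1(\partial X)$ and $b_1(\partial W)$ only, i.e.\ independently of $j$.

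Now I would apply the adjunction-$n$-genus estimate established in the earlier sections (the same ingredient underlying the surgery version, Theorem~\ref{intro:thm:surgery}, and its proof): after making a genus-minimizing surface in $Y_j=V_j\cup_{\phi_j}(\epsilon_jW)$ transverse to $\partial W$ and cutting it into properly embedded pieces in $W$ and in $V_j$, the contribution of the varying piece $V_j$ to $\gamma_n$ is limited by $b_2(V_j)+3b_1(\partial W)<n$, so that $\gamma_n(Y_j)$ is bounded above by a constant $C$ depending only on $W$ (and on the fixed $X$), not on $j$. This contradicts $\gamma_n(Y_j)\to\infty$, and since $X=\natural^{\,n}X_0$ is simply connected the theorem follows.

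The main obstacle is this last step: one must have the definition of the adjunction $n$-genus and the precise statement that it changes by a controlled amount when one removes a codimension-$0$ submanifold $V$ with $b_2(V)+3b_1(\partial V)<n$ and re-glues an arbitrary piece with the same boundary --- a tubing argument pushing a genus-minimizing surface off $V$ at a bounded genus cost --- together with the additivity-type behaviour of the minimal genus function under $\natural$ needed for the construction step. Granting that machinery, the Mayer--Vietoris bound is elementary, and the gain of $b_2=2n$ over the value $b_2=12n+10$ of Theorem~\ref{intro:thm:embedding:closed} comes precisely from allowing $X$ to have nonempty boundary, so that the small Stein piece $X_0$ can be used directly rather than capped off inside a closed $4$-manifold.
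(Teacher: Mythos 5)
Your second half is essentially the paper's argument: fixing one embedded copy $W_0$ and viewing each twisted manifold as the fixed piece $W$ glued to a varying complement $V_j$, the Mayer--Vietoris estimate $b_2(V_j)\le b_2(X)-b_2(W)+b_1(\partial W)$ and the hypothesis $b_2(W)-4b_1(\partial W)>n$ put you exactly in the range where the surgery-finiteness results (Proposition~\ref{prop:genus:surgery:general}, Theorem~\ref{thm:genus:surgery:general}, packaged as Theorem~\ref{thm:genus:twist:embedding}) force the adjunction $n$-genera of the family to be bounded; note, though, that the mechanism there is not cutting a genus-minimizing surface along $\partial W$, but choosing a rational basis most of whose elements are represented by surfaces inside the fixed piece $W$, so that at most $b_2(V_j)+b_1(\partial W)<n$ basis elements are uncontrolled.

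The genuine gap is in the construction step. You take $Y_j=\natural^{\,n}X_{0,j}$ (the paper's $Z_{n,2p}=\natural_n X_{2p}^{(m)}$, so $b_2=2n$ is right), but you justify $G_{Y_j}(n)\to\infty$ by ``the minimal genus function of a boundary-connected-sum detects each summand'' and an ``additivity-type behaviour of the minimal genus function under $\natural$.'' No such additivity is available, and even granting it for classes respecting the splitting it would not suffice: $G_{Y_j}(n)$ is a \emph{minimum over all rational bases} of $H_2(Y_j;\mathbb{Z})$, so you must produce, for every basis --- including bases whose elements mix the summands --- at least $n$ elements of large adjunction genus. The paper does this via Theorem~\ref{thm:estimate:Stein}: it needs the subset $S_p=\{r(p,m)(\pm K_1^p\pm\cdots\pm K_n^p)\}\subset\mathcal{C}_S(Z_{n,2p})$ consisting of \emph{all} $2^n$ sign combinations, which is why each summand $X_{2p}^{(m)}$ must carry two Stein structures with opposite first Chern classes $\pm r(p,m)K^p$ (the two handlebody diagrams of Figures~\ref{fig:Stein_nucleus} and \ref{fig:Stein_nucleus_right}); a single Stein structure per summand only yields $\pm r(p,m)(K_1^p+\cdots+K_n^p)$, which is not $n$-nicely inequivalent for $n\ge 2$. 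With the full sign set, the adjunction inequality for Stein domains applied to suitably chosen Chern classes, together with the linear-algebra step selecting distinct basis elements pairing nontrivially with the independent classes $K_1^p,\dots,K_n^p$, gives the lower bound $G_{Y_j}(n)\ge r(p,m)$ for every rational basis. Without this (or an equivalent argument handling arbitrary bases), your claim that $\gamma_n(Y_j)\to\infty$ is unproven, and the contradiction at the end does not go through.
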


As before, by using the adjunction $n$-genera, we also give a sufficient condition that an infinite family of pairwise exotic (not necessarily closed) 4-manifolds cannot be generated from a 4-manifold by twisting an embedded copy of a 4-manifold and varying the embedding, 

To prove our main results, we give sufficient conditions that infinitely many members of a family of 4-manifolds have pairwise distinct adjunction $n$-genera, by applying the adjunction inequalities. As a corollary, we obtain a simple but effective method for `coarsely' distinguishing smooth structures of 4-manifolds admitting Stein structures. 


\section{Adjunction $n$-genus}\label{sec:genus}
In this section, we introduce the notion of an adjunction $n$-genus together with our conventions. We also briefly recall genus invariants of 4-manifolds. 

For a smooth 4-manifold $X$ and a second homology class $\alpha$, the minimal genus $g_X(\alpha)$ of $\alpha$ is defined to be the minimal genus of a smoothly embedded closed oriented surface representing $\alpha$ (cf.\ \cite{GS}).  Here note that the genus of a disconnected closed oriented surface is defined to be the sum of the genera of all connected components. If $X$ is connected, then for any disconnected closed oriented surface in $X$, one can construct a connected closed oriented surface of the same genus representing the same homology class. 

In general, it is  difficult to determine the minimal genus of a second homology class. 
A strong method for determining the minimal genus is the inequalities below, which are called the adjunction inequalities. In the rest, the self-intersection number of a second homology class $\alpha$ is denoted by $\alpha\cdot \alpha$. 

\begin{theorem}[\cite{KM1}, \cite{MST}, \cite{OzSz_ad}]
Assume that $X$ is a closed connected oriented smooth $4$-manifold with $b_2^+>1$. Let $K$ be a Seiberg-Witten basic class of $X$, and let $\alpha$ be a non-zero class of $H_2(X;\mathbb{Z})$. If $\alpha\cdot \alpha\geq 0$, then 
\begin{equation*}
\lvert \langle K, \alpha\rangle \rvert + \alpha\cdot \alpha \leq 2g_X(\alpha)-2. 
\end{equation*}
If $\alpha\cdot \alpha<0$, and $X$ is of Seiberg-Witten simple type, then 
\begin{equation*}
\lvert \langle K, \alpha\rangle \rvert + \alpha\cdot \alpha \leq \max\{2g_X(\alpha)-2,0\}. 
\end{equation*}
\end{theorem}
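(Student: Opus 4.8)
The plan is to deduce the inequality from Seiberg--Witten gauge theory, where the standing hypothesis $b_2^+(X)>1$ is crucial since it makes $SW_X(\mathfrak{s})$ independent of the metric and perturbation. Fix a basic class $K=c_1(\mathfrak{s})$ with $SW_X(\mathfrak{s})\ne 0$ and a smoothly embedded oriented surface $\Sigma\subset X$ of minimal genus $g=g_X(\alpha)$ with $[\Sigma]=\alpha$. The first step is to reduce to the model case $\alpha\cdot\alpha=0$. If $\alpha\cdot\alpha=m>0$, blow up $X$ at $m$ points of $\Sigma$; the proper transform $\widetilde\Sigma\subset\widetilde X=X\#m\overline{\mathbb{CP}}^2$ again has genus $g$, now has self-intersection $0$, and represents $\alpha-\sum_iE_i$, while by the blow-up formula each $K+\sum_i\varepsilon_iE_i$ with $\varepsilon_i=\pm1$ is a basic class of $\widetilde X$ (which still has $b_2^+>1$). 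Evaluating on $[\widetilde\Sigma]$ gives $\langle K,\alpha\rangle+\sum_i\varepsilon_i$, so choosing the $\varepsilon_i$ to match the sign of $\langle K,\alpha\rangle$ upgrades the $\alpha\cdot\alpha=0$ inequality to $|\langle K,\alpha\rangle|+m\le 2g-2$. When $g=0$ this reduction instead produces a homologically essential sphere of self-intersection $0$, and ruling such a sphere out is the hardest case of the (generalized) Thom conjecture, proved by the same circle of ideas; I therefore concentrate on $g\ge1$.

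For the model case $\alpha\cdot\alpha=0$ and $g\ge1$, the heart of the proof is a neck-stretching/gluing computation. A tubular neighborhood of $\Sigma$ is $\Sigma\times D^2$, so $X=X_0\cup_{\Sigma\times S^1}(\Sigma\times D^2)$ with $X_0=X-\operatorname{int}(\Sigma\times D^2)$. Stretching the neck $\Sigma\times[-T,T]\times S^1$ and invoking Kronheimer--Mrowka's monopole Floer package (or, as in the original argument, the Morgan--Szab\'o--Taubes product formula established by direct analysis) yields a pairing formula expressing $SW_X(\mathfrak{s})$ as a pairing of relative invariants lying in $\widehat{HM}_\bullet\bigl(\Sigma\times S^1,\mathfrak{s}|_{\Sigma\times S^1}\bigr)$ and its dual. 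Here $\mathfrak{s}|_{\Sigma\times S^1}$ is the spin$^c$ structure whose first Chern class pairs with $[\Sigma]$ to give $\langle K,\alpha\rangle$ (an even number, $K$ being characteristic). Now one uses the computation that on $\Sigma_g\times S^1$ with a product-type metric the Seiberg--Witten equations reduce to the vortex equations on $\Sigma_g$, which have no solutions once $|\langle c_1,[\Sigma]\rangle|>2g-2$; moreover, when $\langle K,\alpha\rangle\ne0$ this restricted spin$^c$ structure has non-torsion first Chern class, so there are no reducibles either. Hence $\widehat{HM}_\bullet(\Sigma\times S^1,\mathfrak{s}|_{\Sigma\times S^1})=0$ whenever $|\langle K,\alpha\rangle|>2g-2$, and since $SW_X(\mathfrak{s})\ne0$ the pairing formula forces $|\langle K,\alpha\rangle|\le 2g-2$, i.e.\ $|\langle K,\alpha\rangle|+\alpha\cdot\alpha\le 2g-2$.

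For $\alpha\cdot\alpha<0$ with $X$ of simple type, blowing up only makes the self-intersection more negative, so I would instead follow Ozsv\'ath--Szab\'o and reduce to the non-negative case: fiber sum $X$ along $\Sigma$ with a ruled surface over $\Sigma_g$, chosen so that $-\alpha\cdot\alpha$ is cancelled by the self-intersection of a section and the glued surface becomes square-zero; then apply a product formula for Seiberg--Witten invariants to see that the resulting closed $4$-manifold still carries a nonzero invariant, and invoke the $\alpha\cdot\alpha\ge0$ case already proved. The simple-type hypothesis is exactly what makes the relevant moduli-space dimensions and gradings line up so that the invariant is not lost, which is why it appears only here. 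Alternatively, one can replace the neck-stretching model $\Sigma\times S^1$ by the circle bundle of Euler number $\alpha\cdot\alpha$ over $\Sigma_g$ and appeal to the Seiberg--Witten--Floer computations for Seifert fibered spaces.

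The main obstacle is, unsurprisingly, the gauge-theoretic core: setting up the pairing/product formula and, with it, proving the vanishing of the relevant monopole Floer homology of $\Sigma_g\times S^1$ (respectively of the circle bundles over $\Sigma_g$) outside the predicted band of spin$^c$ structures. Once that analytic input is available, everything else here---the blow-up reduction and the reduction of the negative case to the non-negative one---is routine cohomological bookkeeping.
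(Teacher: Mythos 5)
This statement is not proved in the paper at all: it is the adjunction inequality, imported verbatim from \cite{KM1}, \cite{MST} and \cite{OzSz_ad} as background, and the paper's ``proof'' is the citation. So there is nothing internal to compare your argument against; the right benchmark is the cited literature, and against that benchmark your outline is essentially the standard one. The blow-up reduction from $\alpha\cdot\alpha=m>0$ to square zero is correct as you state it (the proper transform keeps the genus, $b_2^+$ is unchanged, the blow-up formula gives the basic classes $K+\sum_i\varepsilon_iE_i$, and the sign choice yields $\lvert\langle K,\alpha\rangle\rvert+m$); the square-zero case via neck-stretching along $\Sigma\times S^1$, the Morgan--Szab\'o--Taubes product formula, and the vortex-equation vanishing outside the band $\lvert\langle c_1,[\Sigma]\rangle\rvert\le 2g-2$ is exactly the argument of \cite{MST}; and attributing the negative-square, simple-type case to Ozsv\'ath--Szab\'o's reduction/product-formula technique is the correct provenance.

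That said, be aware of what your write-up leaves open. First, the $g=0$ subcase of the $\alpha\cdot\alpha\ge 0$ statement (no essential sphere of non-negative square in a manifold with a basic class) is genuinely part of the theorem, and you explicitly set it aside; since $\lvert\langle K,\alpha\rangle\rvert+\alpha\cdot\alpha\ge 0$, the claimed inequality forces $g\ge 1$, so this case cannot be omitted from a complete proof. Second, the entire analytic core---the pairing/product formula and the vanishing of the relevant Floer groups (or the absence of vortex solutions together with the gluing theory)---is invoked rather than established, and your description of the Ozsv\'ath--Szab\'o step is a plausible paraphrase rather than a verified reconstruction of their induction. As a gloss on a cited background theorem this is perfectly adequate and accurate; as a standalone proof it is an outline whose hardest steps are themselves citations to the same three papers.
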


The function $g_X: H_2(X;\mathbb{Z})\to \mathbb{Z}$ is called the \textit{minimal genus function} of $X$. This function has useful information about smooth structures of 4-manifolds, but in general, it is significantly more difficult to distinguish the functions of two 4-manifolds, since we need to consider all possible identifications of their second homology groups. The author~\cite{Y5} (cf.\ \cite{Y6}) thus introduced the \textit{relative genus function}, which is a genus invariant (i.e.\ a diffeomorphism invariant determined by the minimal genus function on a 4-manifold). Gompf~\cite{G_GT} subsequently introduced a different genus invariant called the \textit{genus rank function}, and the author~\cite{Y8} recently introduced a new genus invariant called \textit{intersection genus}. In this paper we introduce yet another genus invariant to prove our main results.

Here we introduce our conventions. Let $X$ be an oriented smooth 4-manifold such that $H_2(X;\mathbb{Z})$ is finitely generated. If $X$ is compact, then this condition clearly holds. 
A finite subset $\mathbf{v}=\{v_1,v_2,\dots,v_m\}$ of $H_2(X;\mathbb{Z})$ will be called a \textit{rational basis} of $H_2(X;\mathbb{Z})$, if $\mathbf{v}$ becomes a basis of $H_2(X;\mathbb{Q})$ through the natural homomorphism. In the case where $b_2(X)=0$, we regard the empty set as the rational basis of $H_2(X;\mathbb{Z})$.

Now we introduce the notion of an adjunction $n$-genus.

\begin{definition}(1) The function $g_X^{ad}:H_2(X;\mathbb{Z})\to \mathbb{Z}$ defined by 
\begin{equation*}
g_X^{ad}(v)=2g_X(v)-v\cdot v
\end{equation*}
 will be called the \textit{adjunction genus function} of $X$, and the value of $g_X^{ad}(v)$ will be called the \textit{adjunction genus} of $v$. \\
(2) For a rational basis $\mathbf{v}=\{v_1,v_2,\dots,v_m\}$ of $H_2(X;\mathbb{Z})$ and a positive integer $n\leq b_2(X)$, we define a non-negative integer $G_{X,\mathbf{v},n}$ as the $n$-th largest value of $\{g_X^{ad}(v_1), g_X^{ad}(v_2), \dots, g_X^{ad}(v_m)\}$, where the order is counted with multiplicity. For a positive integer $n>b_2(X)$, we put $G_{X,\mathbf{v},n}=0$. 
 For a non-negative integer $n$, we define an integer $G_{X}(n)$ by 
\begin{equation*}
G_{X}(n)=\left\{
\begin{array}{ll}
\min \{G_{X,\mathbf{v},n}\mid \text{$\mathbf{v}$ is a rational basis of $H_2(X;\mathbb{Z})$}\},&\text{if $n\geq 1$;} \\
0,&\text{if $n=0$.}
\end{array}
\right. 
\end{equation*}We call $G_{X}(n)$ the \textit{adjunction $n$-genus} of $X$. 
\end{definition}

The adjunction $n$-genus is clearly a diffeomorphism invariant of oriented 4-manifolds for each fixed integer $n$. We can define a similar invariant using the minimal genus function, but the adjunction $n$-genus is easier to estimate due to the adjunction inequality. One can also define an integral version of the adjunction $n$-genus by using (ordinary) bases instead of rational bases, but we do not pursue this point here. In general, computing adjunction $n$-genera is a very hard problem, but estimating their values are useful for studying smooth properties of (families of) 4-manifolds.


\begin{remark}\label{rem:n-1}For each $n\geq 2$, the inequality $G_{X}(n-1)\geq G_{X}(n)$ holds. Thus for any infinite family of 4-manifolds with pairwise distinct adjunction $n$-genera, at least infinitely many members have pairwise distinct adjunction $(n-1)$-genera. 
\end{remark}

\section{Adjunction $n$-genera, twists and surgeries}\label{sec:nonexistence}

In this section, utilizing adjunction $n$-genera, we give sufficient conditions that an infinite family of 4-manifolds cannot be generated by twists and/or surgeries. We begin with twists.

\begin{theorem}\label{sec:twist:thm:general}
Suppose that infinitely many members of a family of compact connected oriented smooth 4-manifolds have pairwise distinct adjunction $n$-genera for a positive integer $n$. Then for any 4-manifold pair $(X,W)$ with $b_1(\partial W)<n$, the family cannot be generated from $X$ by twisting $W$. $($Note that $W$ and $\partial W$ are not necessarily connected.$)$ 
\end{theorem}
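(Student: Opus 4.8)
The plan is to show that for a \emph{fixed} pair $(X,W)$ the adjunction $n$-genus is uniformly bounded over all 4-manifolds obtained from $X$ by twisting $W$; since each $G_Y(n)$ is a nonnegative integer, such a bound forces these twists to realize only finitely many values of $G(\cdot)(n)$, contradicting the assumption that infinitely many members of the family have pairwise distinct adjunction $n$-genera. So I would assume for contradiction that the family is generated from $X$ by twisting $W$, set $C=X\setminus\operatorname{int}W$ (the common complement, compact since the members are compact), and note that every member is orientation-preservingly diffeomorphic to some $Z_\phi=C\cup_\phi W'$, where $\phi$ is a self-diffeomorphism of $\partial W$ and $W'$ is $W$ or $\overline W$ according as $\phi$ preserves or reverses orientation. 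It then suffices to produce $B_0=B_0(X,W)$ with $G_{Z_\phi}(n)\le B_0$ for all $\phi$.

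The first ingredient is a locality estimate. If $\alpha\in H_2(Z_\phi;\mathbb Z)$ is the image of $\beta\in H_2(C;\mathbb Z)$ under inclusion, then $\alpha$ is represented by a closed oriented surface smoothly embedded in $\operatorname{int}C\subset Z_\phi$, and both its genus and its normal Euler number are computed entirely inside $C$; hence $g_{Z_\phi}(\alpha)\le g_C(\beta)$, $\alpha\cdot\alpha=\beta\cdot\beta$, and so $g^{ad}_{Z_\phi}(\alpha)\le 2g_C(\beta)-\beta\cdot\beta$ (self-intersection computed in $C$). The same holds for classes coming from $H_2(W';\mathbb Z)$. Since $C$, $W$ and $\overline W$ are fixed compact 4-manifolds, I would fix once and for all integral classes whose images form rational bases of $H_2(C;\mathbb Q)$, $H_2(W;\mathbb Q)$ and $H_2(\overline W;\mathbb Q)$, and let $B_0$ be the maximum of the finitely many numbers $2g_N(\beta)-\beta\cdot\beta$ thus obtained (for $N=C,W,\overline W$, surfaces taken in $\operatorname{int}N$); this depends only on $X$ and $W$. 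By the locality estimate, the images in $H_2(Z_\phi;\mathbb Q)$ of these chosen classes span a subspace $I_\phi$ containing $\dim I_\phi$ rationally independent \emph{integral} classes, each of adjunction genus at most $B_0$.

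The second ingredient is a dimension count. The Mayer--Vietoris sequence of $Z_\phi=C\cup_{\partial W}W'$ over $\mathbb Q$ has the exact piece
\begin{equation*}
H_2(C;\mathbb Q)\oplus H_2(W';\mathbb Q)\longrightarrow H_2(Z_\phi;\mathbb Q)\longrightarrow H_1(\partial W;\mathbb Q),
\end{equation*}
whose first map has image $I_\phi$ and cokernel of dimension at most $b_1(\partial W)\le n-1$; hence $\dim I_\phi\ge b_2(Z_\phi)-(n-1)$. I would then take $\dim I_\phi$ rationally independent integral classes in $I_\phi$ of adjunction genus $\le B_0$ and extend them to a rational basis $\mathbf v$ of $H_2(Z_\phi;\mathbb Z)$ by adjoining at most $n-1$ further integral classes (possible since $H_2(Z_\phi;\mathbb Z)$ is finitely generated). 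In the non-increasing list of adjunction genera of the members of $\mathbf v$, only the top at most $n-1$ entries can exceed $B_0$, so the $n$-th entry is $\le B_0$; hence $G_{Z_\phi,\mathbf v,n}\le B_0$ and therefore $G_{Z_\phi}(n)\le B_0$ (the case $b_2(Z_\phi)<n$ being trivial from the definition). This gives the uniform bound and completes the argument.

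I expect the main obstacle to be the locality estimate together with its bookkeeping: one must be sure that ``the adjunction genus of a class supported in a fixed piece'' is genuinely bounded by a constant independent of the gluing map --- the key point being that neither the minimal genus nor the self-intersection of a surface lying in the interior of $C$ (or of $W'$) is affected by how the rest of $Z_\phi$ is assembled --- and then that the only homology not accounted for by such classes is the $(n-1)$-dimensional Mayer--Vietoris defect controlled by $b_1(\partial W)$, which has to be matched precisely against the ``top $n-1$ entries'' appearing in the definition of $G_{Z_\phi,\mathbf v,n}$. The orientation-reversed piece $\overline W$ causes no trouble once it is included among the fixed manifolds used to define $B_0$.
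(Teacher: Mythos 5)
Your argument is correct and follows essentially the same route as the paper's proof (Proposition~\ref{sec:twist:prop:general}): a Mayer--Vietoris estimate bounding the cokernel of $H_2(C)\oplus H_2(W')\to H_2(Z_\phi)$ by $b_1(\partial W)\le n-1$, images of fixed rational bases of the pieces giving classes of uniformly bounded adjunction genus, extension to a rational basis by at most $n-1$ further integral classes, and the observation that the $n$-th largest adjunction genus is then bounded by the piece-wise constant, yielding finitely many possible values of $G(n)$ and hence a contradiction. The only point to tidy up is the disconnected case: a self-diffeomorphism of $\partial W$ may permute components and reverse orientation on only some of them, so the glued piece is a copy of $W$ with orientations reversed on an arbitrary subset of components; since there are only finitely many such pieces (and their adjunction genera are already controlled by bases of $W$ and $\overline W$ chosen componentwise), your constant $B_0$ need only be enlarged accordingly, exactly as the paper remarks.
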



To prove this theorem, we establish the following finiteness result.

\begin{proposition}\label{sec:twist:prop:general}
Suppose that a family of compact connected oriented smooth 4-manifolds is generated from $X$ by twisting $W$ for a 4-manifold pair $(X,W)$. Then, for any integer $n>b_1(\partial W)$, there are at most finitely many integers that can be the values of the adjunction $n$-genera of the members. 
\end{proposition}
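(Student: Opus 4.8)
The plan is to fix the ambient decomposition of the members once and for all and to use Mayer--Vietoris to control the ``new'' part of their second homology. Since the adjunction $n$-genus is an orientation-preserving diffeomorphism invariant, we may assume that every member $Y$ of the family is literally of the form $Y=X_0\cup_{\varphi}W'$, where $X_0:=X\setminus\operatorname{int} W$, where $W'$ is either $W$ or $\overline W$, and where $\varphi$ is the gluing diffeomorphism of the boundaries; by Definition~\ref{intro:def1} the inclusion $X_0\hookrightarrow Y$ is orientation preserving. Fix a rational basis of $H_2(X_0;\mathbb Z)$ represented by closed oriented surfaces $S_1,\dots,S_p$ lying in $\operatorname{int} X_0$, and fix closed oriented surfaces $T_1,\dots,T_q$ in $\operatorname{int} W$ whose classes form a rational basis of $H_2(W;\mathbb Z)$ (hence also of $H_2(\overline W;\mathbb Z)$). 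Let $C$ be the maximum of $0$ together with the numbers $2\,\mathrm{genus}(S_i)-S_i\cdot S_i$ and $2\,\mathrm{genus}(T_j)+\lvert T_j\cdot T_j\rvert$, the self-intersections being computed in $X_0$ and in $W$ respectively. This constant depends only on the pair $(X,W)$, not on the member $Y$.

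The first step is to show that the classes of these fixed surfaces have adjunction genus at most $C$ in \emph{every} member. The key observation is that if a closed oriented surface $\Sigma$ lies in the interior of $X_0$ (respectively of the copy of $W'$) inside $Y$, then a normal push-off of $\Sigma$ may be taken inside that same piece, so $[\Sigma]\cdot[\Sigma]$ computed in $Y$ equals the self-intersection of $\Sigma$ computed in $X_0$ (respectively $\pm$ its self-intersection in $W$), and in particular does not depend on $\varphi$. Hence $g_Y^{ad}([S_i])\le 2\,\mathrm{genus}(S_i)-S_i\cdot S_i\le C$ and $g_Y^{ad}([T_j])\le 2\,\mathrm{genus}(T_j)+\lvert T_j\cdot T_j\rvert\le C$ for all $i,j$ and all members $Y$.

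The second step is to bound, uniformly in $Y$, the number of basis classes of $H_2(Y;\mathbb Z)$ not controlled by these surfaces. Thickening the common boundary, $Y$ is the union of $X_0$ and $W'$ along a collar of $\partial W$, so the rational Mayer--Vietoris sequence contains the exact piece
\begin{equation*}
H_2(X_0;\mathbb Q)\oplus H_2(W';\mathbb Q)\longrightarrow H_2(Y;\mathbb Q)\longrightarrow H_1(\partial W;\mathbb Q),
\end{equation*}
so the image of $H_2(X_0;\mathbb Q)\oplus H_2(W';\mathbb Q)$ in $H_2(Y;\mathbb Q)$ has corank at most $b_1(\partial W)$, and this image is spanned over $\mathbb Q$ by the classes $[S_1],\dots,[S_p],[T_1],\dots,[T_q]$. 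Therefore, choosing a maximal $\mathbb Q$-linearly independent subset $L$ of $\{[S_i]\}\cup\{[T_j]\}$ inside $H_2(Y;\mathbb Z)$ and completing $L$ to a rational basis $\mathbf v$ of $H_2(Y;\mathbb Z)$ by adjoining at most $b_1(\partial W)$ further integral classes, we obtain a rational basis in which all but at most $b_1(\partial W)$ elements have adjunction genus $\le C$. Since $n>b_1(\partial W)$, the $n$-th largest value of $\{g_Y^{ad}(v):v\in\mathbf v\}$ is at most $C$ whenever $n\le b_2(Y)$, while $G_{Y,\mathbf v,n}=0$ by definition whenever $n>b_2(Y)$; in either case $G_Y(n)\le G_{Y,\mathbf v,n}\le C$. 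As $G_Y(n)\ge 0$ by definition, the adjunction $n$-genus of every member lies in the finite set $\{0,1,\dots,C\}$, which proves the proposition; Theorem~\ref{sec:twist:thm:general} then follows at once, since an infinite family with pairwise distinct adjunction $n$-genera cannot have all of these values among finitely many integers.

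I expect the heart of the matter to be the Mayer--Vietoris step combined with the bookkeeping built into the definition of $G_X(n)$: one has to see that the classes not visibly carried by surfaces sitting inside the fixed pieces $X_0$ and $W'$ span a subspace of dimension at most $b_1(\partial W)$, and then exploit the fact that the adjunction $n$-genus records only the $n$-th largest adjunction genus over a cleverly chosen rational basis, so that these few uncontrolled classes do no harm precisely when $n>b_1(\partial W)$. The remaining ingredients --- that the relevant self-intersection numbers are independent of the gluing map, and that a linearly independent set extends to a rational basis --- are routine.
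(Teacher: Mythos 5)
Your proof is correct and takes essentially the same route as the paper's: fix rational bases of $X\setminus \operatorname{int}W$ and of $W$, use Mayer--Vietoris to see that the classes they carry span a subspace of corank at most $b_1(\partial W)$ in $H_2(Y;\mathbb{Q})$, extend a maximal independent subset of them to a rational basis by at most $b_1(\partial W)$ extra classes, and conclude that for $n>b_1(\partial W)$ the adjunction $n$-genus is bounded by a member-independent constant. The only point to phrase more carefully is the disconnected case, where the regluing may reverse orientation on only some components of $W$ (so the reglued piece need not be $W$ or $\overline{W}$); your constant $C$, built with $\lvert T_j\cdot T_j\rvert$, already absorbs this, which is exactly how the paper disposes of its finitely many reoriented copies $\widehat{W}$.
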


We first prove the $b_1(\partial W)=0$ case, since its proof is short and nicely demonstrates our main idea. 

\begin{proof}[Proof of the $b_1(\partial W)=0$ case of Proposition~\ref{sec:twist:prop:general}]Let $\{X_\lambda\}_{\lambda\in \Lambda}$ be a family of 4-manifolds satisfying the assumption. We first consider the case where $W$ is connected. Then each $X_\lambda$ is decomposed as either 
\begin{equation*}
X_\lambda=(X-\textnormal{int}\,W)\cup W \quad \textnormal{or} \quad X_\lambda=(X-\textnormal{int}\,W)\cup \overline{W}.
\end{equation*}
 We fix rational bases $\mathbf{v}=\{v_1,v_2,\dots,v_s\}$ and $\mathbf{w}=\{w_1,w_2,\dots,w_t\}$ of $H_2(X-\textnormal{int}\,W;\mathbb{Z})$ and $H_2(W;\mathbb{Z})$ respectively. We note that $\mathbf{w}$ is a rational basis of $H_2(\overline{W};\mathbb{Z})$, since this group is equal to $H_2(W;\mathbb{Z})$. We regard each $v_i$ (resp.\ $w_j$) as an element of $H_2(X_\lambda;\mathbb{Z})$ via the inclusion $X-\textnormal{int}\,W\hookrightarrow X_\lambda$ (resp.\ either the inclusions $W\hookrightarrow X_\lambda$ or $\overline{W}\hookrightarrow X_\lambda$). Due to the assumption $b_1(\partial W)=0$, the Mayer-Vietoris exact sequence for the above decomposition of each $X_\lambda$ implies that $\mathbf{v}\cup \mathbf{w}$ is a rational basis of $H_2(X_\lambda;\mathbb{Z})$. Since each $v_i$ (resp.\ $w_j$) is represented by a surface in $X-\textnormal{int}\,W$ (resp.\ either $W$ or $\overline{W}$), the definition of the adjunction $n$-genus shows that 
\begin{equation*}
0\: \leq \: G_{X_\lambda}(n) \: \leq \: G_{X_\lambda,\mathbf{v}\cup \mathbf{w},n} \: \leq \: \max\{G_{X-\textnormal{int}\,W, \mathbf{v},1}, \;G_{W,\mathbf{w},1}, \;G_{\overline{W},\mathbf{w},1}\}. 
\end{equation*}
Note that the rightmost value is independent of $\lambda$.  Hence at most finitely many integers can be the values of the adjunction $n$-genera of the members. 

In the case where $W$ is disconnected, each $X_\lambda$ is decomposed as $(X-\textnormal{int}\,W)\cup \widehat{W}$, where $\widehat{W}$ is a copy of $W$ possibly equipped with the reverse orientations for some of its connected components, depending on $\lambda$. Since $W$ has at most finitely many connected components due to the compactness condition of $W$, there are at most finitely many choices of $\widehat{W}$. Therefore, we can prove the desired finiteness similarly to the connected case. 
\end{proof}

Now we prove the general case. 

\begin{proof}[Proof of Proposition~\ref{sec:twist:prop:general}]Let $\{X_\lambda\}_{\lambda\in \Lambda}$ be a family of 4-manifolds satisfying the assumption. We first consider the case where $W$ is connected. Then each $X_\lambda$ is decomposed as either $X_\lambda=(X-\textnormal{int}\,W)\cup W$ or $X_\lambda=(X-\textnormal{int}\,W)\cup \overline{W}$. As before, we fix rational bases $\mathbf{v}=\{v_1,v_2,\dots,v_s\}$ and $\mathbf{w}=\{w_1,w_2,\dots,w_t\}$ of $H_2(X-\textnormal{int}\,W;\mathbb{Z})$ and $H_2(W;\mathbb{Z})$ respectively, and we regard each $v_i$ and $w_j$ as elements of $H_2(X_\lambda;\mathbb{Z})$ via the inclusions. The Mayer-Vietoris exact sequence for the decomposition of $X_\lambda$ gives the following exact sequence. 
\begin{equation*}
H_2(W;\mathbb{Q})\oplus H_2(X-\textnormal{int}\,W;\mathbb{Q})\stackrel{\varphi_{\lambda}}{\to} H_2(X_{\lambda};\mathbb{Q})\stackrel{\partial_{\lambda}}{\to} H_1(\partial W;\mathbb{Q}). 
\end{equation*}
This exact sequence shows that 
\begin{equation*}
b_2(X_{\lambda})=\dim (\textnormal{Im}\, \varphi_{\lambda}) +\dim (\textnormal{Im}\, \partial_{\lambda}) \quad \text{and} \quad \dim (\textnormal{Im}\, \partial_{\lambda}) \leq b_1(\partial W).
\end{equation*}

Now we estimate the value of $G_{X_{\lambda}}(n)$ for an integer $n>b_1(\partial W)$. If $\lambda\in \Lambda$ satisfies $\dim (\textnormal{Im}\, \varphi_{\lambda})=0$, then we see that $b_2(X_{\lambda})\leq b_1(\partial W)<n$. Hence we get $G_{X_{\lambda}}(n)=0$ by the definition of the adjunction $n$-genus. Now suppose that $\lambda\in \Lambda$ satisfies $\dim (\textnormal{Im}\, \varphi_{\lambda})=k$ for some $k>0$. Then there exists a subset $\mathbf{u}_{\lambda}=\{u_1,u_2,\dots,u_k\}$ of $\mathbf{v}\cup \mathbf{w}$ such that $u_1,u_2,\dots,u_k$ are linearly independent in $H_2(X_{\lambda};\mathbb{Z})$, since $\mathbf{v}$ and $\mathbf{w}$ are rational bases of $H_2(X-\textnormal{int}\,W;\mathbb{Z})$ and $H_2(W;\mathbb{Z})$. Therefore, there exists a (possibly empty) subset $\mathbf{x}_{\lambda}=\{x_1,x_2,\dots, x_l\}$ of $H_2(X_{\lambda};\mathbb{Z})$ such that $\mathbf{u}_{\lambda}\cup \mathbf{x}_{\lambda}$ is a rational basis of $H_2(X_{\lambda};\mathbb{Z})$. We here note that 
\begin{equation*}
l=\dim (\textnormal{Im}\, \partial_{\lambda})\leq b_1(\partial W)<n.
\end{equation*}
We thus see that 
\begin{align*}
0\leq G_{X_{\lambda}}(n)&\leq G_{X_{\lambda},\mathbf{u}_{\lambda}\cup \mathbf{x}_{\lambda}}(n)\\
&\leq \max\{g_{X_{\lambda}}^{ad}(u_i)\mid1\leq i\leq k\}\\
&\leq \max\{G_{X-\textnormal{int}\,W, \mathbf{v},1}, \;G_{W,\mathbf{w},1}, \;G_{\overline{W},\mathbf{w},1}\}. 
\end{align*}
Since the last value is independent of $\lambda$, at most finitely many integers can be the values of the adjunction $n$-genera of the members of $\{X_\lambda\}_{\lambda\in \Lambda}$. 

In the case where $W$ is disconnected, each $X_\lambda$ is decomposed as $(X-\textnormal{int}\,W)\cup \widehat{W}$, where $\widehat{W}$ is a copy of $W$ possibly equipped with the reverse orientations for some of its connected components, depending on $\lambda$. Since $W$ has at most finitely many connected components due to the compactness condition of $W$, there are at most finitely many choices of $\widehat{W}$. Therefore, we can prove the desired finiteness similarly to the connected case. 
\end{proof}

\begin{proof}[Proof of Theorem~\ref{sec:twist:thm:general}]
This is straightforward from Proposition~\ref{sec:twist:prop:general}. 
\end{proof}

We next discuss surgeries.

\begin{theorem}\label{thm:genus:surgery:general}Suppose that infinitely many members of a family of compact connected oriented smooth 4-manifolds with $b_2=m$ have pairwise distinct adjunction $n$-genera for a positive integer $n$. Then, for any 4-manifold pair $(X,W)$ satisfying 
\begin{equation*}
m-b_2(X)+b_2(W)+3b_1(\partial W)<n, 
\end{equation*}
the family cannot be generated from $X$ by performing surgeries on $W$. $($Note that $W$ and $\partial W$ are not necessarily connected.$)$ 
\end{theorem}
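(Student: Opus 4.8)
The plan is to reproduce the route from Proposition~\ref{sec:twist:prop:general} to Theorem~\ref{sec:twist:thm:general}. First I would prove the following surgery analogue of Proposition~\ref{sec:twist:prop:general}: \emph{if a family of compact connected oriented smooth $4$-manifolds, all having $b_2=m$, is generated from $X$ by performing surgeries on $W$, then for every integer $n>m-b_2(X)+b_2(W)+3b_1(\partial W)$ only finitely many integers occur as the values of the adjunction $n$-genera of the members.} Granting this, Theorem~\ref{thm:genus:surgery:general} is immediate: if infinitely many members had pairwise distinct adjunction $n$-genera while the family were generated from such an $(X,W)$, this finiteness statement would be contradicted.

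For the finiteness statement, write a member as $X_\lambda=Y\cup_{\partial W}Z_\lambda$, where $Y:=X-\operatorname{int}W$ is fixed (and compact, since each $X_\lambda$ and each $Z_\lambda$ is) and $Z_\lambda$ is a compact oriented $4$-manifold with $\partial Z_\lambda$ orientation-preservingly diffeomorphic to $\partial W$. If $m<n$ then $G_{X_\lambda}(n)=0$ for all $\lambda$, so we may assume $m\geq n$. The essential new difficulty, compared with twists, is that $Z_\lambda$ is not drawn from a finite list and may have arbitrarily large $b_2$, so surfaces lying in $Z_\lambda$ carry no uniform genus bound; the whole point is to show that only a $\lambda$-independent number of rational-basis classes of $H_2(X_\lambda)$ need to be charged to $Z_\lambda$. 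Fix a rational basis $\mathbf{v}$ of $H_2(Y;\mathbb{Z})$, and regard each $v\in\mathbf{v}$ as a class $\iota_\lambda(v)\in H_2(X_\lambda;\mathbb{Z})$ via the inclusion $Y\hookrightarrow X_\lambda$. A closed surface in $\operatorname{int}Y$ representing $v$ is also a surface in $X_\lambda$ representing $\iota_\lambda(v)$, with the same genus and self-intersection number, so $g_{X_\lambda}^{ad}(\iota_\lambda(v))\leq g_Y^{ad}(v)\leq G_{Y,\mathbf{v},1}$ for every $v\in\mathbf{v}$. Let $d_\lambda$ be the rank of $\iota_\lambda\otimes\mathbb{Q}$; choose $d_\lambda$ elements of $\mathbf{v}$ whose images are linearly independent in $H_2(X_\lambda;\mathbb{Q})$ and enlarge them to a rational basis of $H_2(X_\lambda;\mathbb{Z})$ by adjoining $m-d_\lambda$ further integral classes. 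At least $d_\lambda$ of the $m$ basis elements so obtained have adjunction genus at most $G_{Y,\mathbf{v},1}$, so the $n$-th largest of their adjunction genera is at most $G_{Y,\mathbf{v},1}$ as soon as $m-d_\lambda\leq n-1$, and then $0\leq G_{X_\lambda}(n)\leq G_{Y,\mathbf{v},1}$, a bound independent of $\lambda$.

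The remaining task is to verify $m-d_\lambda\leq n-1$ for every $\lambda$ from the hypothesis on $(X,W)$, and this is the step I expect to require the most care: it must see homologically that enough of $H_2(X_\lambda)$ is detected inside the fixed piece $Y$, with a bound involving only $b_2(X)$, $b_2(W)$, $b_1(\partial W)$ and nothing about the unconstrained $Z_\lambda$. The Mayer--Vietoris sequence of $X_\lambda=Y\cup_{\partial W}Z_\lambda$ identifies $\ker(\iota_\lambda\otimes\mathbb{Q})$ with the image in $H_2(Y;\mathbb{Q})$ of $\ker\bigl(H_2(\partial W;\mathbb{Q})\to H_2(Z_\lambda;\mathbb{Q})\bigr)$, whose dimension is at most $b_2(\partial W)=b_1(\partial W)$ (Poincar\'e duality for the closed oriented $3$-manifold $\partial W$); hence $d_\lambda\geq b_2(Y)-b_1(\partial W)$ for all $\lambda$. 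The Mayer--Vietoris sequence of $X=Y\cup_{\partial W}W$ gives $b_2(Y)+b_2(W)\geq b_2(X)-b_1(\partial W)$, i.e.\ $b_2(Y)\geq b_2(X)-b_2(W)-b_1(\partial W)$. Combining,
\[
m-d_\lambda\;\leq\;m-b_2(Y)+b_1(\partial W)\;\leq\;m-b_2(X)+b_2(W)+2b_1(\partial W)\;\leq\;m-b_2(X)+b_2(W)+3b_1(\partial W)\;<\;n,
\]
so $m-d_\lambda\leq n-1$ and hence $G_{X_\lambda}(n)\in\{0,1,\dots,G_{Y,\mathbf{v},1}\}$ for every member. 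This proves the finiteness statement, and with it the theorem. The argument uses only that $\partial W$ is closed, so $W$, $Z_\lambda$, and $Y$ need not be connected; and, unlike for twists, no separate treatment of an orientation-reversed glued piece is needed, since a surgery already permits an arbitrary $Z_\lambda$.
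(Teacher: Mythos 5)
Your proposal is correct, and its skeleton matches the paper's: reduce to a finiteness statement for the possible values of $G_{X_\lambda}(n)$, decompose each member as $X_\lambda=(X-\textnormal{int}\,W)\cup W_\lambda$, bound the adjunction genera of classes carried by the fixed piece $Y=X-\textnormal{int}\,W$ by the $\lambda$-independent constant $G_{Y,\mathbf{v},1}$, control the number of exceptional rational-basis classes via Mayer--Vietoris, and finally convert the hypothesis $m-b_2(X)+b_2(W)+3b_1(\partial W)<n$ using $b_2(Y)+b_2(W)\geq b_2(X)-b_1(\partial W)$, exactly as in the paper's proof of Theorem~\ref{thm:genus:surgery:general} from Proposition~\ref{prop:genus:surgery:general}. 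Where you genuinely diverge is in the bookkeeping inside the finiteness step. The paper fixes a rational basis $\mathbf{w}_\lambda$ of $H_2(W_\lambda;\mathbb{Z})$ as well, estimates $b_2(W_\lambda)\leq m-b_2(Y)+b_1(\partial W)$, and charges to the ``bad'' set all basis elements coming from $\mathbf{w}_\lambda$ together with the $l\leq b_1(\partial W)$ classes added to complete a rational basis; this is why its Proposition~\ref{prop:genus:surgery:general} needs $n>m-b_2(Y)+2b_1(\partial W)$. You instead never touch $H_2(W_\lambda)$ at all: you only track the rank $d_\lambda$ of $H_2(Y;\mathbb{Q})\to H_2(X_\lambda;\mathbb{Q})$, identify its kernel via Mayer--Vietoris with the image of $\ker\bigl(H_2(\partial W;\mathbb{Q})\to H_2(W_\lambda;\mathbb{Q})\bigr)$, and bound it by $b_2(\partial W)=b_1(\partial W)$, so only $m-d_\lambda\leq m-b_2(Y)+b_1(\partial W)$ basis classes need be charged. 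This is slightly sharper (it would prove the finiteness statement, and hence a version of the theorem, with one fewer copy of $b_1(\partial W)$), and it makes the irrelevance of the unconstrained glued piece $W_\lambda$ more transparent; the paper's version, on the other hand, records the intermediate estimate of $b_2(W_\lambda)$, which it reuses in the proofs of Theorems~\ref{thm:genus:surgery:general} and \ref{thm:genus:twist:embedding}. All the individual steps you give (the kernel identification, the use of Poincar\'e duality for the closed oriented, possibly disconnected, $3$-manifold $\partial W$, the $m<n$ reduction, and the count showing at most $n-1$ basis classes can exceed $G_{Y,\mathbf{v},1}$) are sound.
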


To prove this theorem, we show the following finiteness result. 

\begin{proposition}\label{prop:genus:surgery:general}
Suppose that a family of compact connected oriented smooth 4-manifolds with $b_2=m$ is generated from $X$ by performing surgeries on $W$ for a 4-manifold pair $(X,W)$. Then, for any positive integer $n$ satisfying 
\begin{equation*}
n>m-b_2(X-\textnormal{int}\,W)+2b_1(\partial W), 
\end{equation*}
there are at most finitely many integers that can be the values of the adjunction $n$-genera of the members.
\end{proposition}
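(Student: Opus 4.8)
The plan for proving Proposition~\ref{prop:genus:surgery:general} is to repeat the proof of Proposition~\ref{sec:twist:prop:general} with essentially no change, the one new feature being that the glued-in piece is now entirely unconstrained, so that all control has to come from the fixed complement $X_0:=X-\textnormal{int}\,W$ and from the Mayer--Vietoris sequence along $\partial W$. Write each member of the family as $X_\lambda=X_0\cup_{\partial W}Z_\lambda$, where $Z_\lambda$ is the ($\lambda$-dependent) compact oriented $4$-manifold glued in, with $\partial Z_\lambda$ orientation-preservingly diffeomorphic to $\partial W$. Fix, once and for all, a rational basis $\mathbf{v}=\{v_1,\dots,v_s\}$ of $H_2(X_0;\mathbb{Z})$, and regard each $v_i$ as an element of $H_2(X_\lambda;\mathbb{Z})$ via the inclusion $X_0\hookrightarrow X_\lambda$. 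Since $G_{X_\lambda}(n)\geq 0$ always, it is enough to bound $G_{X_\lambda}(n)$ from above by a constant that does not depend on $\lambda$.

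First I would bound, via Mayer--Vietoris with $\mathbb{Q}$ coefficients for $X_\lambda=X_0\cup_{\partial W}Z_\lambda$, the codimension of the subspace
\begin{equation*}
V_\lambda:=\textnormal{Im}\bigl(H_2(X_0;\mathbb{Q})\longrightarrow H_2(X_\lambda;\mathbb{Q})\bigr)
\end{equation*}
in $H_2(X_\lambda;\mathbb{Q})$. Exactness at $H_2(X_0;\mathbb{Q})\oplus H_2(Z_\lambda;\mathbb{Q})$ shows that the kernel of $H_2(X_0;\mathbb{Q})\to H_2(X_\lambda;\mathbb{Q})$ is the image, under the map induced by $\partial W\hookrightarrow X_0$, of the kernel of $H_2(\partial W;\mathbb{Q})\to H_2(Z_\lambda;\mathbb{Q})$; its dimension is therefore at most $b_2(\partial W)=b_1(\partial W)$, the last equality being Poincar\'e duality on the closed oriented $3$-manifold $\partial W$. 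Hence $\dim V_\lambda\geq b_2(X_0)-b_1(\partial W)$, and with $m=b_2(X_\lambda)$ we get
\begin{equation*}
m-\dim V_\lambda\ \leq\ m-b_2(X-\textnormal{int}\,W)+b_1(\partial W)\ <\ n,
\end{equation*}
where the last inequality follows from the hypothesis $n>m-b_2(X-\textnormal{int}\,W)+2b_1(\partial W)$ (the argument, in fact, needs only the weaker bound with $b_1(\partial W)$ in place of $2b_1(\partial W)$).

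Next I would convert this into a genus estimate exactly as in the twist case. Put $k:=\dim V_\lambda$. If $k=0$ then $m=m-\dim V_\lambda<n$, so $b_2(X_\lambda)<n$ and $G_{X_\lambda}(n)=0$ by definition; hence we may assume $k\geq1$, in which case $m-k<n$. Choose a subset $\mathbf{u}_\lambda=\{u_1,\dots,u_k\}\subseteq\mathbf{v}$ whose images in $H_2(X_\lambda;\mathbb{Q})$ are linearly independent (so that they span $V_\lambda$), and extend it by a subset $\mathbf{x}_\lambda\subseteq H_2(X_\lambda;\mathbb{Z})$ of cardinality $m-k$ to a rational basis $\mathbf{u}_\lambda\cup\mathbf{x}_\lambda$ of $H_2(X_\lambda;\mathbb{Z})$. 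Each $u_i$ is represented by a closed oriented surface lying in $X_0$; passing to $X_\lambda$ changes neither the normal Euler number of this surface---so $u_i\cdot u_i$ is the same in $X_0$ and in $X_\lambda$---nor does it raise the minimal genus, so $g_{X_\lambda}(u_i)\leq g_{X_0}(u_i)$, whence $g_{X_\lambda}^{ad}(u_i)\leq g_{X_0}^{ad}(u_i)$. Since at most $\lvert\mathbf{x}_\lambda\rvert=m-k<n$ of the $m$ adjunction genera attached to the basis $\mathbf{u}_\lambda\cup\mathbf{x}_\lambda$ can exceed $\max\{g_{X_\lambda}^{ad}(u_i)\mid 1\leq i\leq k\}$, the $n$-th largest of them does not exceed this maximum, and therefore
\begin{equation*}
0\ \leq\ G_{X_\lambda}(n)\ \leq\ G_{X_\lambda,\,\mathbf{u}_\lambda\cup\mathbf{x}_\lambda,\,n}\ \leq\ \max\{\,g_{X_\lambda}^{ad}(u_i)\mid 1\leq i\leq k\,\}\ \leq\ G_{X-\textnormal{int}\,W,\,\mathbf{v},\,1}.
\end{equation*}
The right-hand side is independent of $\lambda$, so only finitely many integers arise as adjunction $n$-genera of members of the family; Theorem~\ref{thm:genus:surgery:general} then follows at once, as Theorem~\ref{sec:twist:thm:general} does from Proposition~\ref{sec:twist:prop:general}.

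Note that, in contrast with Proposition~\ref{sec:twist:prop:general}, disconnectedness of $W$ requires no separate discussion here, since only $X_0$ and $\partial W$ enter the argument and $Z_\lambda$ was arbitrary to begin with. I expect the only points needing genuine care to be the Mayer--Vietoris bookkeeping of the second paragraph---isolating exactly which kernel is controlled by $b_1(\partial W)$ (via Poincar\'e duality on $\partial W$, with the evident modifications when $\partial W$ is disconnected)---and the verification that both $u_i\cdot u_i$ and the minimal-genus inequality really transfer across $X_0\hookrightarrow X_\lambda$; the rest is purely formal, and no part of this should present a serious obstacle given the twist case.
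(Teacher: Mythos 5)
Your proof is correct, and although it follows the same overall Mayer--Vietoris strategy as the paper, it diverges at the key counting step in a way worth recording. Writing $X_0=X-\textnormal{int}\,W$ as you do, the paper also fixes a rational basis $\mathbf{w}_\lambda$ of $H_2(W_\lambda;\mathbb{Z})$ (your $Z_\lambda$), works with $k=\dim(\textnormal{Im}\,\psi_\lambda)$ for $\psi_\lambda\colon H_2(X_0;\mathbb{Q})\oplus H_2(W_\lambda;\mathbb{Q})\to H_2(X_\lambda;\mathbb{Q})$, proves the auxiliary estimate $b_2(W_\lambda)\leq m-b_2(X_0)+b_1(\partial W)$, and then counts as uncontrolled both the elements of $\mathbf{x}_\lambda$ (at most $b_1(\partial W)$ of them) and the elements of $\mathbf{u}_\lambda\cap\mathbf{w}_\lambda$ (at most $b_2(W_\lambda)$ of them); the total is at most $m-b_2(X_0)+2b_1(\partial W)<n$, which is precisely where the $2b_1(\partial W)$ in the hypothesis enters. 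You instead bound the codimension of $V_\lambda=\textnormal{Im}\bigl(H_2(X_0;\mathbb{Q})\to H_2(X_\lambda;\mathbb{Q})\bigr)$ by $m-b_2(X_0)+b_1(\partial W)$ via the kernel of the inclusion-induced map, so you never need a basis of $H_2(W_\lambda)$ or an estimate on $b_2(W_\lambda)$, and only the $m-k$ elements of $\mathbf{x}_\lambda$ are uncontrolled. Your parenthetical observation is therefore right: your argument proves the proposition under the weaker hypothesis $n>m-b_2(X-\textnormal{int}\,W)+b_1(\partial W)$, and would correspondingly sharpen the constant $3b_1(\partial W)$ to $2b_1(\partial W)$ in Theorem~\ref{thm:genus:surgery:general}. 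The points you flag as needing care are handled correctly: the identification of $\ker\bigl(H_2(X_0;\mathbb{Q})\to H_2(X_\lambda;\mathbb{Q})\bigr)$ by exactness, Poincar\'e duality $b_2(\partial W)=b_1(\partial W)$ on the closed oriented $3$-manifold $\partial W$, and the invariance of $u_i\cdot u_i$ and monotonicity of the minimal genus under $X_0\hookrightarrow X_\lambda$ (facts the paper uses implicitly in its final chain of inequalities); you are also right that disconnectedness of $W$ requires no separate discussion here, just as in the paper's own proof of this proposition.
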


We first prove the $b_1(\partial W)=0$ case, since its proof is short and nicely demonstrates our main idea as before. 

\begin{proof}[Proof of the $b_1(\partial W)=0$ case of Proposition~\ref{prop:genus:surgery:general}]Let $\{X_\lambda\}_{\lambda\in \Lambda}$ be a family of 4-manifolds satisfying the assumption. 
Then each $X_\lambda$ has a decomposition 
\begin{equation*}
X_\lambda=(X-\textnormal{int}\,W)\cup W_\lambda
\end{equation*}
 for some compact oriented 4-manifold $W_\lambda$. We fix rational bases $\mathbf{v}=\{v_1,v_2,\dots,v_s\}$ and $\mathbf{w}_\lambda=\{w_1,w_2,\dots,w_t\}$ of $H_2(X-\textnormal{int}\,W;\mathbb{Z})$ and $H_2(W_\lambda;\mathbb{Z})$ respectively. By the assumption $b_1(\partial W)=0$, we easily see that $\mathbf{v}\cup \mathbf{w}_\lambda$ is a rational basis of $H_2(X_\lambda;\mathbb{Z})$, where we regard $\mathbf{v}$ and $\mathbf{w}_\lambda$ as subsets of $H_2(X_\lambda;\mathbb{Z})$ via the inclusions $X-\textnormal{int}\,W\hookrightarrow X_\lambda$ and $W_\lambda\hookrightarrow X_\lambda$. We can thus check that
\begin{equation*}
t=b_2(W_\lambda)=m-b_2(X-\textnormal{int}\,W)<n. 
\end{equation*}
The definition of the adjunction $n$-genus together with this inequality implies that 
\begin{equation*}
0\leq  G_{X_\lambda}(n)\leq G_{X_\lambda, \mathbf{v}\cup \mathbf{w}_\lambda,n} \leq G_{X-\textnormal{int}\,W, \mathbf{v},1}.  
\end{equation*}
Therefore, at most finitely many integers can be the values of the adjunction $n$-genera of the members of $\{X_\lambda\}_{\lambda\in \Lambda}$. 
\end{proof}

Now we prove the general case. 

\begin{proof}[Proof of Proposition~\ref{prop:genus:surgery:general}]Let $\{X_\lambda\}_{\lambda\in \Lambda}$ be a family of 4-manifolds satisfying the assumption. 
Then each $X_\lambda$ has a decomposition $X_\lambda=(X-\textnormal{int}\,W)\cup W_\lambda$ for some compact oriented 4-manifold $W_\lambda$. We fix rational bases $\mathbf{v}=\{v_1,v_2,\dots,v_s\}$ and $\mathbf{w}_\lambda=\{w_1,w_2,\dots,w_t\}$ of $H_2(X-\textnormal{int}\,W;\mathbb{Z})$ and $H_2(W_\lambda;\mathbb{Z})$ respectively. We regard each $v_i$ and $w_j$ as elements of $H_2(X_\lambda;\mathbb{Z})$ via the inclusions $X-\textnormal{int}\,W\hookrightarrow X_\lambda$ and $W_{\lambda}\hookrightarrow X_\lambda$. 

We here estimate the value of $b_2(W_{\lambda})$. The Mayer-Vietoris exact sequence for $X_\lambda=(X-\textnormal{int}\,W)\cup W_{\lambda}$ gives the following exact sequence. 
\begin{equation*}
H_2(\partial W;\mathbb{Q})\stackrel{\varphi_{\lambda}}{\to} H_2(X-\textnormal{int}\,W;\mathbb{Q})\oplus H_2(W_{\lambda};\mathbb{Q}) \stackrel{\psi_{\lambda}}{\to} H_2(X_{\lambda};\mathbb{Q})\stackrel{\partial_{\lambda}}{\to} H_1(\partial W;\mathbb{Q}). 
\end{equation*}
This exact sequence implies that 
\begin{gather*}
m=b_2(X_{\lambda})=\dim (\textnormal{Im}\, \psi_{\lambda}) + \dim (\textnormal{Im}\, \partial_{\lambda}),\\ 
\dim (\textnormal{Im}\, \varphi_{\lambda}) \: \leq \: b_1(\partial W), \\ 
 \dim (\textnormal{Im}\, \partial_{\lambda}) \: \leq \: b_1(\partial W), \\ 
b_2(X-\textnormal{int}\,W)+b_2(W_{\lambda})-b_1(\partial W) \: \leq \: \dim (\textnormal{Im}\, \psi_{\lambda}).
\end{gather*}
Using these inequalities, we can easily show that 
\begin{equation*}
b_2(W_{\lambda}) \: \leq \: m-b_2(X-\textnormal{int}\,W)+b_1(\partial W). 
\end{equation*}

Now we estimate the value of $G_{X_{\lambda}}(n)$ for a positive integer $n$ satisfying 
\begin{equation*}
n>m-b_2(X-\textnormal{int}\,W)+2b_1(\partial W). 
\end{equation*}
If $\lambda\in \Lambda$ satisfies $\dim (\textnormal{Im}\, \psi_{\lambda})<n-b_1(\partial W)$, then we easily see $b_2(X_{\lambda})<n$. Hence we get $G_{X_{\lambda}}(n)=0$ by the definition of the adjunction $n$-genus. We thus assume that $\lambda\in \Lambda$ satisfies $\dim (\textnormal{Im}\, \psi_{\lambda})=k$ for some $k\geq n-b_1(\partial W)$. 
Then the condition $\dim (\textnormal{Im}\, \psi_{\lambda})=k$ implies that there exists a subset $\mathbf{u}_{\lambda}=\{u_1,u_2,\dots,u_k\}$ of $\mathbf{v}\cup \mathbf{w}_\lambda$ such that $u_1,u_2,\dots,u_k$ are linearly independent in $H_2(X_{\lambda};\mathbb{Z})$, since $\mathbf{v}$ and $\mathbf{w}_\lambda$ are rational bases of $H_2(X-\textnormal{int}\,W;\mathbb{Z})$ and $H_2(W_{\lambda};\mathbb{Z})$. Therefore, there exists a (possibly empty) subset $\mathbf{x}_{\lambda}=\{x_1,x_2,\dots, x_l\}$ of $H_2(X_{\lambda};\mathbb{Z})$ such that $\mathbf{u}_{\lambda}\cup \mathbf{x}_{\lambda}$ is a rational basis of $H_2(X_{\lambda};\mathbb{Z})$. Note that $l=\dim (\textnormal{Im}\, \partial_{\lambda})\leq b_1(\partial W)$, since $b_2(X_{\lambda})=k + \dim (\textnormal{Im}\, \partial_{\lambda})$. Due to the above estimate of $b_2(W_{\lambda})$ and the assumption on $n$, we thus obtain 
\begin{equation*}
b_2(W_{\lambda})+l<n. 
\end{equation*}
This inequality shows that the number of elements of $(\mathbf{u}_{\lambda}\cap \mathbf{w}_{\lambda})\cup \mathbf{x}_{\lambda}$ is less than $n$. 
We thus obtain the following estimate of $G_{X_{\lambda}}(n)$. 
\begin{align*}
0\leq G_{X_{\lambda}}(n)&\leq G_{X_{\lambda},\mathbf{u}_{\lambda}\cup \mathbf{x}_{\lambda}}(n)\\
&\leq \max\{g_{X_{\lambda}}^{ad}(u_i)\mid u_i\in \mathbf{v}\}\\
&\leq G_{X-\textnormal{int}\,W, \mathbf{v},1}. 
\end{align*}
Therefore, at most finitely many integers can be the values of the adjunction $n$-genera of the members of $\{X_\lambda\}_{\lambda\in \Lambda}$. 
\end{proof}

\begin{proof}[Proof of Theorem~\ref{thm:genus:surgery:general}]For a 4-manifold pair $(X, W)$, using the Mayer-Vietoris exact sequence for the decomposition $X=(X-\textnormal{int}\,W)\cup W$, we can easily show that 
\begin{equation*}
b_2(W)+b_2(X-\textnormal{int}\,W)\: \geq \: b_2(X)-b_1(\partial W),
\end{equation*}
similarly to the estimate of $b_2(W_{\lambda})$ in the proof of Proposition~\ref{prop:genus:surgery:general}. Hence if a positive integer $n$ satisfies 
\begin{equation*}
n>m-b_2(X)+b_2(W)+3b_1(\partial W),
\end{equation*}
then the above inequality shows
\begin{equation*}
n>m-b_2(X-\textnormal{int}\,W)+2b_1(\partial W). 
\end{equation*}
Theorem~\ref{thm:genus:surgery:general} thus follows from Proposition~\ref{prop:genus:surgery:general}. 
\end{proof}

Finally, we discuss twists without fixing embeddings. Applying Theorem~\ref{thm:genus:surgery:general}, we prove the following results. 

\begin{theorem}\label{thm:genus:twist:embedding}Suppose that infinitely many members of a family of compact connected oriented smooth 4-manifolds with $b_2=m$ have pairwise distinct adjunction $n$-genera for a positive integer $n$. Then, for any compact oriented smooth 4-manifolds $X$ and $W$ satisfying 
\begin{equation*}
m-n<b_2(W)-4b_1(\partial W), 
\end{equation*}
the family cannot be generated from $X$ by twisting an embedded copy of $W$ and varying the embedding of $W$ into $X$. $($Note that $W$ and $\partial W$ are not necessarily connected.$)$ 
\end{theorem}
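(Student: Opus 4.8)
The plan is to deduce the statement from Theorem~\ref{thm:genus:surgery:general} by recognizing the family at hand as one that is generated, by performing surgeries on a \emph{fixed} codimension-zero submanifold, from a \emph{fixed} $4$-manifold. Suppose toward a contradiction that the family is generated from $X$ by twisting an embedded copy of $W$ while varying the embedding, and let $\{X_\lambda\}_{\lambda}$ be an infinite collection of members, each with $b_2=m$, having pairwise distinct adjunction $n$-genera. By definition each $X_\lambda$ decomposes as $X_\lambda=\widehat W_\lambda\cup_{\partial W}Y_\lambda$, where $Y_\lambda:=X-\textnormal{int}\,\iota_\lambda(W)$ for a codimension-zero embedding $\iota_\lambda\colon W\hookrightarrow\textnormal{int}\,X$, and $\widehat W_\lambda$ is a copy of $W$ with the orientations of some of its (finitely many) connected components possibly reversed. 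The two observations that drive the proof are: (i) $\widehat W_\lambda$ ranges over a \emph{finite} set $\mathcal W_0$ of $4$-manifolds, each with $b_2$ equal to $b_2(W)$; and (ii) although $Y_\lambda$ itself may vary wildly with $\lambda$, its boundary $\partial Y_\lambda$ is orientation-preserving diffeomorphic to a closed oriented $3$-manifold $N$ that, up to finitely many choices, depends only on $X$ and $W$ (essentially $\partial X$ together with the orientation reversal of $\partial W$).

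Granting this, I would first use (i) to pass, by the pigeonhole principle, to a $W_0\in\mathcal W_0$ for which the subfamily $\{X_\lambda:\widehat W_\lambda=W_0\}$ still has infinitely many pairwise distinct adjunction $n$-genera. Next, using (ii), I would fix once and for all a compact oriented $4$-manifold $V$ with $\partial V$ orientation-preserving diffeomorphic to $N$ and set $Z:=W_0\cup_{\partial W}V$, the union formed by gluing $V$ to $W_0$ along the $\partial W$-part of $\partial V$; then $Z$ is a fixed compact oriented $4$-manifold, $V\subset Z$ is a fixed codimension-zero submanifold, and $Z-\textnormal{int}\,V\cong W_0$. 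For each $\lambda$ in the subfamily we have $\partial Y_\lambda\cong\partial V$, and replacing $V$ by $Y_\lambda$ yields $(Z-\textnormal{int}\,V)\cup Y_\lambda=W_0\cup_{\partial W}Y_\lambda=X_\lambda$, so the subfamily is generated from $Z$ by performing surgeries on $V$. It then remains to verify the numerical hypothesis of Theorem~\ref{thm:genus:surgery:general} for $(Z,V)$, namely $m-b_2(Z)+b_2(V)+3b_1(\partial V)<n$. The Mayer--Vietoris sequence for $Z=W_0\cup_{\partial W}V$ gives $b_2(Z)\ge b_2(W_0)+b_2(V)-b_1(\partial W)=b_2(W)+b_2(V)-b_1(\partial W)$, hence
\begin{equation*}
m-b_2(Z)+b_2(V)+3b_1(\partial V)\ \le\ m-b_2(W)+b_1(\partial W)+3b_1(\partial V);
\end{equation*}
since $b_1(\partial V)=b_1(N)=b_1(\partial W)+b_1(\partial X)$, the right-hand side is $m-b_2(W)+4b_1(\partial W)+3b_1(\partial X)$, which equals $m-b_2(W)+4b_1(\partial W)<n$ as soon as $\partial X$ is a rational homology sphere --- in particular when $X$ is closed. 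Theorem~\ref{thm:genus:surgery:general} then tells us the subfamily cannot be generated from $Z$ by surgeries on $V$, a contradiction, and the theorem follows.

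The conceptual reduction above is short; I expect the real work to lie in the bookkeeping of the last step --- pinning down $N$ and the finite set $\mathcal W_0$ with all orientations of disconnected components tracked correctly, making the passage to the fixed pair $(Z,V)$ literally fit the definition of a surgery, and carrying out the Mayer--Vietoris estimate that converts the hypothesis $m-n<b_2(W)-4b_1(\partial W)$ into the hypothesis of Theorem~\ref{thm:genus:surgery:general}. The contribution of $\partial X$ is the one point where this clean reduction stalls; to handle an arbitrary compact $X$ I would instead argue directly, exactly in the spirit of the proof of Proposition~\ref{prop:genus:surgery:general}: the Mayer--Vietoris sequence for $X_\lambda=\widehat W_\lambda\cup_{\partial W}Y_\lambda$ shows $b_2(Y_\lambda)\le m-b_2(W)+b_1(\partial W)$ and produces a rational basis of $H_2(X_\lambda;\mathbb Z)$ consisting of images of classes of $\widehat W_\lambda$ --- whose adjunction genera are bounded by a constant depending only on the finite set $\mathcal W_0$, since the adjunction genus of a class does not increase under inclusion of a codimension-zero submanifold --- together with at most $b_2(Y_\lambda)+b_1(\partial W)\le m-b_2(W)+2b_1(\partial W)<n$ further classes; as in Proposition~\ref{prop:genus:surgery:general} this bounds $G_{X_\lambda}(n)$ independently of $\lambda$, contradicting the assumed infinitude of distinct values.
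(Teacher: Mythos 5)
Your proposal is correct and its core is the same reduction the paper uses: keep a copy of $W$ as the fixed piece and regard the varying complement as the piece being surgered, so that Theorem~\ref{thm:genus:surgery:general} (or its proof) applies. The paper's own proof is shorter: it takes a single embedded copy $W_0\subset X$ and applies Theorem~\ref{thm:genus:surgery:general} directly to the pair $(X,\,X-\textnormal{int}\,W_0)$, using only the Mayer--Vietoris estimate $b_2(X-\textnormal{int}\,W_0)\le b_2(X)-b_2(W)+b_1(\partial W)$, so that the hypothesis $m-n<b_2(W)-4b_1(\partial W)$ immediately yields $n>m-b_2(X)+b_2(X-\textnormal{int}\,W_0)+3b_1(\partial W)$; in particular no auxiliary manifold $Z=W_0\cup V$ is constructed. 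The two places where you deviate are exactly the points the paper treats implicitly: (i) the possible orientation reversal of components of $W$ under twisting, which you handle by pigeonholing over the finite set $\mathcal W_0$ (the same device the paper uses in the disconnected case of Proposition~\ref{sec:twist:prop:general}); and (ii) the boundary bookkeeping when $\partial X\neq\emptyset$ --- you read the term $b_1(\partial W)$ in Theorem~\ref{thm:genus:surgery:general} as the full boundary of the removed piece, which picks up an extra $3b_1(\partial X)$ and forces your fallback direct argument, whereas in the paper the relevant quantity is the $b_1$ of the gluing region (the frontier $\partial W_0\cong\partial W$), which is what actually enters the Mayer--Vietoris sequence in the proof of Proposition~\ref{prop:genus:surgery:general}. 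Your fallback argument for general $X$ is precisely that Mayer--Vietoris computation redone with the roles of the two pieces swapped, so it is a correct (and somewhat more careful) unwinding of the paper's citation rather than a genuinely different route; it also correctly confirms that only $m-b_2(W)+2b_1(\partial W)<n$ is really needed at that step, consistent with the paper's remark that the bound $b_2(W)-4b_1(\partial W)$ is not optimal.
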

\begin{proof}
Assume that an infinite family of 4-manifolds satisfies the assumption of this theorem. Suppose, to the contrary, that there exist compact oriented smooth 4-manifolds $X$ and $W$ with 
\begin{equation*}
m-n<b_2(W)-4b_1(\partial W)
\end{equation*}
such that the family is generated from $X$ by twisting an embedded copy of $W$ and varying the embedding of $W$ into $X$. Let $W_0$ be an embedded copy of $W$ in $X$. Then the above assumption on $X$ and $W$ guarantees that the family is generated from $X$ by performing surgeries on $X-\textnormal{int}\, W_0$. We can easily check that 
\begin{equation*}
b_2(X-\textnormal{int}\, W_0) \: \leq \: b_2(X)-b_2(W)+b_1(\partial W), 
\end{equation*}
similarly to the estimate of $b_2(W_{\lambda})$ in the proof of Proposition~\ref{prop:genus:surgery:general}. Hence we see that 
\begin{equation*}
n>m-b_2(X)+b_2(X-\textnormal{int}\, W_0)+3b_1(\partial W). 
\end{equation*}
Since infinitely many members of the family have pairwise distinct adjunction $n$-genera, this inequality contradicts Theorem~\ref{thm:genus:surgery:general}. 
\end{proof}

\begin{remark}As seen from the proofs, all the results in this section still hold without the compactness condition of the members of a family of 4-manifolds, if the second homology group of each member is finitely generated. 
\end{remark}

\section{Estimating adjunction $n$-genera}\label{sec:estimate}
In this section, we give sufficient conditions that infinitely many members of a family of 4-manifolds to have pairwise distinct adjunction $n$-genera, by utilizing the adjunction inequalities. These conditions also provide simple but effective methods for coarsely distinguishing smooth structures. 

Let $\{X_{i}\}_{i\in \mathbb{N}}$ be a family of compact connected oriented smooth 4-manifolds, and let $S_{i}$ $(i\in \mathbb{N})$ be a finite subset of $H^2(X_{i};\mathbb{Z})$. We introduce the following condition, which plays an important role for estimating the adjunction $n$-genera. 

\begin{definition}\label{def:n-strongly inequivalent}For a positive integer $n$, we say that the members of $\{S_{i}\}_{i\in \mathbb{N}}$ are pairwise \textit{$n$-nicely inequivalent}, 
if for each $i\in \mathbb{N}$ there exist a positive integer $m_i\geq n$, non-zero classes $K_1^{(i)}, K_2^{(i)}, \dots, K_{m_i}^{(i)}$ of $H^2(X_{i};\mathbb{Z})$, and non-negative integers $a^{(i)}_{1},a^{(i)}_{2},\dots, a^{(i)}_{m_i}$ satisfying the following three conditions. 
\begin{itemize} 
 \item $S_i=\{\pm a^{(i)}_{1}K_1^{(i)}\pm a^{(i)}_{2}K_2^{(i)}\pm \dots \pm a^{(i)}_{m_i}K_{m_i}^{(i)}\}$ for each $i\in \mathbb{N}$. 
 \item $K_1^{(i)}, K_2^{(i)}, \dots, K_n^{(i)}$ are primitive and linearly independent for each $i\in \mathbb{N}$. 
 \item For each $1\leq j\leq n$, the integer sequence $\{a_j^{(i)}\}_{i\in \mathbb{N}}$ is strictly increasing. 
\end{itemize}
\end{definition}

We first discuss the case where the members are closed 4-manifolds. For a closed connected oriented smooth 4-manifold $X$ with $b_2^+>1$, let $\mathcal{B}_{X}$ denote the set of Seiberg-Witten basic classes of $X$. 

\begin{theorem}\label{thm:estimate:SW}Let $\{X_{i}\}_{i\in \mathbb{N}}$ be an infinite family of connected closed oriented smooth 4-manifolds with $b_2^+>1$ that are of Seiberg-Witten simple types. Suppose each $\mathcal{B}_{X_i}$ has a subset $S_i$ such that the members of $\{S_{i}\}_{i\in \mathbb{N}}$ are pairwise $n$-nicely inequivalent for a positive integer $n$. Then at least infinitely many members have pairwise distinct adjunction $n$-genera. 
\end{theorem}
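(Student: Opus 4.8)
The plan is to show that for each fixed $j$ with $1\le j\le n$, the adjunction genus $g_{X_i}^{ad}$ evaluated on a suitable homology class detecting $K_j^{(i)}$ grows with $i$, and then to feed this into the definition of $G_{X_i}(n)$ to conclude that the adjunction $n$-genera are eventually strictly increasing along a subsequence. First I would recall that, since each $X_i$ is closed with $b_2^+>1$ and of Seiberg-Witten simple type, the adjunction inequality (the version in the excerpt) applies to every basic class. The $n$-nicely inequivalent hypothesis hands us, for each $i$, primitive linearly independent classes $K_1^{(i)},\dots,K_n^{(i)}\in\mathcal B_{X_i}$ and strictly increasing coefficient sequences $\{a_j^{(i)}\}_i$, together with the fact that $S_i\subset\mathcal B_{X_i}$ consists of all signed combinations $\pm a_1^{(i)}K_1^{(i)}\pm\cdots\pm a_{m_i}^{(i)}K_{m_i}^{(i)}$. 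The key point is that $S_i$ being contained in $\mathcal B_{X_i}$ means \emph{every} such signed combination is a basic class, and the adjunction inequality holds for all of them simultaneously.

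The heart of the argument is the following lower bound. Fix a rational basis $\mathbf v=\{v_1,\dots,v_{b_2(X_i)}\}$ of $H_2(X_i;\mathbb Z)$. Since $K_1^{(i)},\dots,K_n^{(i)}$ are linearly independent in $H^2(X_i;\mathbb Q)\cong \operatorname{Hom}(H_2(X_i;\mathbb Q),\mathbb Q)$, for each such basis at least $n$ of the basis elements, say $v_{p_1},\dots,v_{p_n}$ after reindexing, can be chosen so that the $n\times n$ matrix $\big(\langle K_r^{(i)},v_{p_s}\rangle\big)_{r,s}$ is nonsingular over $\mathbb Q$; more carefully, among any rational basis one can select $n$ elements on which the span of $K_1^{(i)},\dots,K_n^{(i)}$ restricts to an injective map, so in particular for each $r\le n$ there is a basis element $v$ with $\langle K_r^{(i)},v\rangle\ne 0$. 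The cleaner route is: for any rational basis $\mathbf v$, the $n$-th largest value of $\{g_{X_i}^{ad}(v)\}$ is at least the $n$-th largest of a collection of lower bounds coming from the adjunction inequality. Concretely, by choosing the signs in the combination $K=\sum\pm a_r^{(i)}K_r^{(i)}\in S_i\subset\mathcal B_{X_i}$ so that the contributions $\pm a_r^{(i)}\langle K_r^{(i)},v\rangle$ all have the same sign, we get
\begin{equation*}
g_{X_i}^{ad}(v)=2g_{X_i}(v)-v\cdot v\ \ge\ |\langle K,v\rangle|+2\ =\ \sum_{r=1}^{m_i} a_r^{(i)}|\langle K_r^{(i)},v\rangle|+2
\end{equation*}
for every nonzero $v$ (when $v\cdot v<0$ one uses the simple-type case, noting $g_{X_i}(v)$ can be assumed positive after a reduction, or one argues the bound trivially). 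Since $K_1^{(i)},\dots,K_n^{(i)}$ are linearly independent, for \emph{any} rational basis $\mathbf v$ there are at least $n$ basis elements $v$ for which $\sum_{r\le n}a_r^{(i)}|\langle K_r^{(i)},v\rangle|$ is bounded below by $\min_r a_r^{(i)}$ times a positive integer (no basis element can kill all of $K_1^{(i)},\dots,K_n^{(i)}$; a counting/rank argument shows at least $n$ of them pair nontrivially with the span). Hence $G_{X_i,\mathbf v,n}\ge \min\{a_1^{(i)},\dots,a_n^{(i)}\}+2$, and taking the minimum over all rational bases gives $G_{X_i}(n)\ge \min\{a_1^{(i)},\dots,a_n^{(i)}\}+2$.

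Because each sequence $\{a_j^{(i)}\}_i$ $(1\le j\le n)$ is strictly increasing, $\min\{a_1^{(i)},\dots,a_n^{(i)}\}\to\infty$ as $i\to\infty$, so $G_{X_i}(n)\to\infty$; in particular, passing to a subsequence on which $G_{X_i}(n)$ is strictly increasing, infinitely many members have pairwise distinct adjunction $n$-genera, which is the claim. I expect the main obstacle to be the linear-algebra step asserting that \emph{every} rational basis $\mathbf v$ contains at least $n$ elements pairing nontrivially with the span of $K_1^{(i)},\dots,K_n^{(i)}$ in a way that yields a uniform lower bound: one must argue that one cannot hide the growth by choosing a clever basis. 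The resolution is that the linear functionals $K_1^{(i)},\dots,K_n^{(i)}$ span an $n$-dimensional subspace of $H_2(X_i;\mathbb Q)^\ast$, so their common kernel has codimension $n$; any basis must therefore have at least $n$ elements outside this kernel, and for each such element the pairing with some $K_r^{(i)}$ is a nonzero integer, giving $|\langle K_r^{(i)},v\rangle|\ge 1$. A short induction over $r=1,\dots,n$ choosing distinct basis elements handles the "at least $n$'' bookkeeping, after which the sign-alignment of the combination in $S_i$ and the adjunction inequality finish the estimate. The remaining details — handling the $v\cdot v<0$ case via simple type and treating basis elements with $g_{X_i}(v)=0$ — are routine.
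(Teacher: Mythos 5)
Your proposal is correct and follows essentially the same route as the paper: apply the adjunction inequality to a sign-aligned basic class $\pm a_1^{(i)}K_1^{(i)}\pm\cdots\pm a_{m_i}^{(i)}K_{m_i}^{(i)}\in S_i$, deduce that any rational basis contains at least $n$ elements whose adjunction genus is at least $\min\{a_1^{(i)},\dots,a_n^{(i)}\}$, hence $G_{X_i}(n)\to\infty$. The only cosmetic difference is your linear-algebra step (counting basis elements outside the common kernel of $K_1^{(i)},\dots,K_n^{(i)}$) versus the paper's choice of distinct $v_{t_j}$ with $\langle K_j^{(i)},v_{t_j}\rangle\neq 0$, and your ``$+2$'' refinement, which is immaterial and, as you note, degenerates in the $g_{X_i}(v)=0$ case anyway.
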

\begin{remark}Many families produced by logarithmic transforms or Fintushel-Stern's knot surgeries satisfy this assumption, as we observe using the formulas of Seiberg-Witten invariants (cf.\ \cite{FS4}). We note that, if a closed oriented smooth 4-manifold with $b_2^+>1$ contains a smoothly embedded torus with the self-intersection number $0$ representing a non-torsion second homology class, then the 4-manifold is of Seiberg-Witten simple type, according to \cite{MMS}. 
\end{remark}

\begin{proof} By the assumption, for each $i\in \mathbb{N}$ there exist a positive integer $m_i\geq n$, non-zero classes $K_1^{(i)}, K_2^{(i)}, \dots, K_{m_i}^{(i)}$ of $H^2(X_{i};\mathbb{Z})$, and non-negative integers $a^{(i)}_{1},a^{(i)}_{2},\dots, a^{(i)}_{m_i}$ satisfying the following conditions. 
\begin{itemize} 
 \item [(i)] $S_i=\{\pm a^{(i)}_{1}K_1^{(i)}\pm a^{(i)}_{2}K_2^{(i)}\pm \dots \pm a^{(i)}_{m_i}K_{m_i}^{(i)}\}$ for each $i\in \mathbb{N}$. 
 \item [(ii)] $K_1^{(i)}, K_2^{(i)}, \dots, K_n^{(i)}$ are primitive and linearly independent for each $i\in \mathbb{N}$. 
 \item [(iii)] For each $1\leq j\leq n$, the integer sequence $\{a_j^{(i)}\}_{i\in \mathbb{N}}$ is strictly increasing.
\end{itemize}
Let $\mathbf{v}_i=\{v_1,\dots,v_k\}$ be a rational basis of $H_2(X_{i};\mathbb{Z})$. Since $K_1^{(i)}, K_2^{(i)}, \dots, K_n^{(i)}$ are linearly independent, the universal coefficient theorem implies that there exist pairwise distinct elements $v_{t_1},v_{t_2},\dots, v_{t_n}$ of $\mathbf{v}_i$ such that $\langle K_j^{(i)}, v_{t_j}\rangle \neq 0$ for each $1\leq j\leq n$. 
We note that each element of $S_i$ is a Seiberg-Witten basic class of $X_i$. Due to the condition (i), the adjunction inequality to each $v_{t_j}$ shows that 
\begin{equation*}
\sum_{l=1}^{m_i} \left| \langle a_l^{(i)}K_l^{(i)}, \; v_{t_j}\rangle\right|\; +\; v_{t_j}\cdot v_{t_j}\; \leq \; \max\{0, 2g_{X_{i}}(v_{t_j})-2\}. 
\end{equation*}
This implies that $a_j^{(i)}\leq \; g^{ad}_{X_{i}}(v_{t_j})$ for each $1\leq j\leq n$. It thus follows that 
\begin{equation*}
\min\{a_1^{(i)}, a_2^{(i)}, \dots, a_n^{(i)}\}\leq \; \min\{g^{ad}_{X_{i}}(v_{t_1}), g^{ad}_{X_{i}}(v_{t_2}),\dots, g^{ad}_{X_{i}}(v_{t_n})\}\leq G_{X_i, \mathbf{v}, n}. 
\end{equation*}
Hence we obtain 
\begin{equation*}
\min\{a_1^{(i)}, a_2^{(i)}, \dots, a_n^{(i)}\} \leq \; G_{X_i}(n). 
\end{equation*}
The condition (iii) thus shows $\lim_{i\to \infty} G_{X_{i}}(n)=\infty$. Therefore, at least 
infinitely many members of $\{X_{i}\}_{i\in \mathbb{N}}$ have pairwise distinct adjunction $n$-genera. 
\end{proof}

We next discuss the case where the members admit Stein structures. For a compact connected oriented smooth 4-manifold $X$ with boundary, we define a subset $\mathcal{C}_S(X)$ of $H^2(X;\mathbb{Z})$ by 
\begin{equation*}
\mathcal{C}_S(X)=\{\pm c_1(X,J)\in H^2(X;\mathbb{Z})\mid \text{$J$ is a Stein structure on $X$}\}, 
\end{equation*}
where $c_1(X,J)$ denotes the first Chern class of $X$ equipped with a Stein structure $J$. 

\begin{theorem}\label{thm:estimate:Stein}Let $\{X_{i}\}_{i\in \mathbb{N}}$ be an infinite family of compact connected oriented smooth 4-manifolds with boundary. 
Suppose each $\mathcal{C}_S(X_i)$ has a subset $S_i$ such that the members of $\{S_{i}\}_{i\in \mathbb{N}}$ are pairwise $n$-nicely inequivalent for a positive integer $n$. Then at least infinitely many members have pairwise distinct adjunction $n$-genera. 
\end{theorem}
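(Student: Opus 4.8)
The plan is to mimic the proof of Theorem~\ref{thm:estimate:SW} essentially line by line, the one substantive change being that the simple-type adjunction inequality for closed $4$-manifolds, applied there to Seiberg--Witten basic classes, is replaced by the adjunction inequality for compact Stein $4$-manifolds, applied to the classes $\pm c_1(X_i,J)$. The input I will use is: if $(X,J)$ is a compact Stein $4$-manifold with boundary and $\Sigma\subset X$ is a smoothly embedded closed connected oriented surface of genus $g$ with $[\Sigma]\ne 0$ in $H_2(X;\mathbb{Z})$, then
\[ [\Sigma]\cdot[\Sigma]+\bigl\lvert\langle c_1(X,J),[\Sigma]\rangle\bigr\rvert\;\le\;\max\{2g-2,\,0\}. \]
This is the Stein-domain form of the adjunction inequality: by the Lisca--Mati\'c embedding theorem, $(X,J)$ embeds holomorphically into a closed K\"ahler surface $Z$ of general type with $b_2^+(Z)>1$ and $c_1(X,J)=c_1(Z)|_X$, and then one applies the simple-type adjunction inequality of Kronheimer--Mrowka, Morgan--Szab\'o--Taubes and Ozsv\'ath--Szab\'o in $Z$ to $\Sigma\subset X\subset Z$ (a minimal K\"ahler surface of general type is of Seiberg--Witten simple type and $\pm c_1(Z)$ are basic classes). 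If $[\Sigma]$ becomes null-homologous in $Z$, then both $[\Sigma]\cdot[\Sigma]$ and $\langle c_1(X,J),[\Sigma]\rangle$ vanish, so the displayed inequality still holds.

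Granting this, I would run the argument of Theorem~\ref{thm:estimate:SW}. Fix $i\in\mathbb{N}$ and take the data $m_i\ge n$, the non-zero classes $K_1^{(i)},\dots,K_{m_i}^{(i)}\in H^2(X_i;\mathbb{Z})$ with $K_1^{(i)},\dots,K_n^{(i)}$ linearly independent, and the non-negative integers $a_1^{(i)},\dots,a_{m_i}^{(i)}$ provided by the hypothesis that $\{S_i\}_{i\in\mathbb{N}}$ are pairwise $n$-nicely inequivalent. Let $\mathbf{v}=\{v_1,\dots,v_k\}$ be an arbitrary rational basis of $H_2(X_i;\mathbb{Z})$. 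As in the proof of Theorem~\ref{thm:estimate:SW}, the universal coefficient theorem together with the linear independence of $K_1^{(i)},\dots,K_n^{(i)}$ yields pairwise distinct $v_{t_1},\dots,v_{t_n}\in\mathbf{v}$ with $\langle K_j^{(i)},v_{t_j}\rangle\ne 0$ for every $j$ (extract a non-vanishing $n\times n$ minor of the matrix with entries $\langle K_j^{(i)},v_l\rangle$). Fix $j$ and choose signs $\varepsilon_l\in\{\pm1\}$ with $\varepsilon_l\langle a_l^{(i)}K_l^{(i)},v_{t_j}\rangle\ge 0$ for all $l$; then $K:=\sum_l\varepsilon_l a_l^{(i)}K_l^{(i)}$ lies in $S_i\subseteq\mathcal{C}_S(X_i)$, so $K=\pm c_1(X_i,J)$ for some Stein structure $J$ on $X_i$, and
\[ \bigl\lvert\langle c_1(X_i,J),v_{t_j}\rangle\bigr\rvert=\bigl\lvert\langle K,v_{t_j}\rangle\bigr\rvert=\sum_{l=1}^{m_i}\bigl\lvert\langle a_l^{(i)}K_l^{(i)},v_{t_j}\rangle\bigr\rvert\;\ge\;a_j^{(i)}\bigl\lvert\langle K_j^{(i)},v_{t_j}\rangle\bigr\rvert\;\ge\;a_j^{(i)}. \]
Representing $v_{t_j}$ by a smoothly embedded connected closed oriented surface of genus $g_{X_i}(v_{t_j})$ (possible since $X_i$ is connected; see Section~\ref{sec:genus}) and applying the Stein adjunction inequality of the previous paragraph gives $a_j^{(i)}+v_{t_j}\cdot v_{t_j}\le\max\{2g_{X_i}(v_{t_j})-2,\,0\}\le 2g_{X_i}(v_{t_j})$, hence $a_j^{(i)}\le 2g_{X_i}(v_{t_j})-v_{t_j}\cdot v_{t_j}=g_{X_i}^{ad}(v_{t_j})$.

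Since $v_{t_1},\dots,v_{t_n}$ are $n$ distinct members of $\mathbf{v}$, we get $\min\{a_1^{(i)},\dots,a_n^{(i)}\}\le\min\{g_{X_i}^{ad}(v_{t_1}),\dots,g_{X_i}^{ad}(v_{t_n})\}\le G_{X_i,\mathbf{v},n}$, and as $\mathbf{v}$ was arbitrary this forces $\min\{a_1^{(i)},\dots,a_n^{(i)}\}\le G_{X_i}(n)$. By the third condition in the definition of pairwise $n$-nicely inequivalent families, each sequence $\{a_j^{(i)}\}_{i\in\mathbb{N}}$ is strictly increasing, so $\min\{a_1^{(i)},\dots,a_n^{(i)}\}\to\infty$, whence $G_{X_i}(n)\to\infty$ as $i\to\infty$. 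An integer sequence tending to infinity assumes infinitely many distinct values, so infinitely many members of $\{X_i\}_{i\in\mathbb{N}}$ have pairwise distinct adjunction $n$-genera, as required.

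The only genuinely non-formal ingredient here is the Stein adjunction inequality itself; once it is available in precisely the $\max\{2g-2,0\}$ form (valid for classes of negative self-intersection), the rest is a transcription of the proof of Theorem~\ref{thm:estimate:SW}. So the main point of the write-up is to justify that form: pass through a \emph{minimal} complex surface of general type, which is of Seiberg--Witten simple type, via Lisca--Mati\'c, rather than through an arbitrary symplectic filling, and record the null-homology degeneracy noted in the first paragraph. I do not expect any further obstacle.
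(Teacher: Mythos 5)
Your proposal is correct and follows exactly the route the paper takes: its proof of Theorem~\ref{thm:estimate:Stein} is precisely ``the adjunction inequality holds for compact Stein 4-manifolds (\cite{LM1}, \cite{AM}, \cite{LM2}), so the proof of Theorem~\ref{thm:estimate:SW} goes through,'' which is what you carry out, with the additional (correct) justification of the $\max\{2g-2,0\}$ form via the Lisca--Mati\'c embedding into a minimal surface of general type.
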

\begin{proof}Since a version of the adjunction inequality holds for any compact Stein 4-manifold (\cite{LM1}, \cite{AM}, \cite{LM2}), the above proof of Theorem~\ref{thm:estimate:SW} works in this case as well. 
\end{proof}

This sufficient condition is of independent interest, since it is a simple sufficient condition that an infinite family has infinitely many pairwise non-diffeomorphic members. Note that the adjunction $n$-genus is a diffeomorphism invariant.  

We here recall that the \textit{divisibility} $d(\alpha)$ of an element $\alpha$ in a finitely generated abelian group $G$ is defined by 
\begin{equation*}
d(\alpha)=\left\{
\begin{array}{ll}
\max\{n\in \mathbb{Z}\mid \text{$\alpha=n\alpha'$ for some $\alpha'\in G$}\},&\text{if $\alpha$ is not torsion;}\\
0, &\text{if $\alpha$ is torsion.}
\end{array}
\right.
\end{equation*}
We immediately obtain the following corollary. 

\begin{corollary}\label{cor:estimate:Stein}For a given infinite family of compact Stein 4-manifolds, if the divisibilities of the first Chern classes of the members are pairwise distinct, then at least infinitely many members have pairwise distinct adjunction $1$-genera. 
\end{corollary}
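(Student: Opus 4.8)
The plan is to obtain the corollary as the $n=1$ instance of Theorem~\ref{thm:estimate:Stein}. For this I need, for each member $X_i$, a subset $S_i \subseteq \mathcal{C}_S(X_i)$ such that, after passing to an infinite subfamily, the sets $S_i$ are pairwise $1$-nicely inequivalent in the sense of Definition~\ref{def:n-strongly inequivalent}. The natural candidate is $S_i = \{\pm c_1(X_i,J_i)\}$, where $J_i$ is the given Stein structure; this is a subset of $\mathcal{C}_S(X_i)$ by definition.

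Concretely, write $c_i = c_1(X_i,J_i) \in H^2(X_i;\mathbb{Z})$. Since the divisibilities $d(c_i)$ are pairwise distinct non-negative integers, at most one of them equals $0$; discarding that member if necessary, I may assume every $c_i$ is non-torsion, and after reindexing I may further assume that $i \mapsto d(c_i)$ is strictly increasing. For each $i$ choose $K_i \in H^2(X_i;\mathbb{Z})$ with $c_i = d(c_i)\,K_i$; then $K_i$ is non-zero (as $c_i \neq 0$) and primitive (by maximality of the divisibility). Taking $m_i = 1$, $K_1^{(i)} = K_i$, and $a_1^{(i)} = d(c_i)$, the set $S_i = \{\pm d(c_i) K_i\}$ satisfies all three conditions of Definition~\ref{def:n-strongly inequivalent} with $n=1$: the prescribed description of $S_i$ is immediate, a single non-zero primitive class is linearly independent, and $\{a_1^{(i)}\}_{i\in\mathbb{N}} = \{d(c_i)\}_{i\in\mathbb{N}}$ was arranged to be strictly increasing. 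Theorem~\ref{thm:estimate:Stein} then yields that infinitely many members of the subfamily, and hence of the original family, have pairwise distinct adjunction $1$-genera.

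I do not anticipate a substantive obstacle: the argument is essentially bookkeeping layered on top of Theorem~\ref{thm:estimate:Stein}. The only points that need care are the at-most-one member whose first Chern class is torsion (simply discarded) and the reindexing that makes the divisibility sequence monotone (harmless, since the conclusion only asserts that infinitely many members are mutually distinguished), together with the elementary fact that dividing a non-torsion class by its divisibility produces a primitive class.
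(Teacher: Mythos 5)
Your proposal is correct and follows exactly the route the paper intends: the paper states the corollary as an immediate consequence of Theorem~\ref{thm:estimate:Stein}, with the implicit argument being precisely your choice $S_i=\{\pm c_1(X_i,J_i)\}=\{\pm d(c_i)K_i\}$ with $K_i$ primitive, after discarding a possible torsion class and reindexing so the divisibilities increase. Your write-up just makes that bookkeeping explicit, so there is nothing to add.
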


\section{Examples}\label{sec:example}
In this section, we prove our main results. To do this, we give exotic 4-manifolds with distinct adjunction $n$-genera by applying results in Section~\ref{sec:estimate}. 

\begin{theorem}\label{sec:example:thm:example}For each positive integer $n$, there exists an infinite family of homeomorphic but pairwise non-diffeomorphic simply connected closed oriented smooth 4-manifolds with pairwise distinct adjunction $n$-genera. Furthermore, there exist infinitely many such families whose homeomorphism types are pairwise distinct. 
\end{theorem}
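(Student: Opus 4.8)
The plan is to produce, for each positive integer $n$, an infinite family of simply connected closed oriented smooth 4-manifolds that are all homeomorphic to one fixed manifold but have pairwise distinct adjunction $n$-genera, and then to vary the construction to get infinitely many such families with distinct homeomorphism types. The natural source of such families is Fintushel--Stern's knot surgery applied simultaneously along $n$ disjoint homologically independent tori, or equivalently repeated logarithmic transforms, starting from a suitable elliptic surface such as $E(k)$ with $k$ large. First I would recall that $E(k)$ for $k \geq 2$ contains $n$ disjoint cusp neighborhoods, each containing a homologically essential torus of square $0$ representing a primitive class, and that performing knot surgery along the $j$-th torus with a knot $K_j$ multiplies the Seiberg--Witten invariant by the Alexander polynomial $\Delta_{K_j}(t_j)$ in the corresponding variable $t_j = \exp(2[T_j])$, while leaving the homeomorphism type unchanged (the result is still simply connected, closed, with the same intersection form). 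The key point is that by choosing $K_j$ with $\Delta_{K_j}$ of controlled degree we can arrange the Seiberg--Witten basic class set to contain a subset of the form $\{\pm a_1^{(i)} K_1 \pm \dots \pm a_n^{(i)} K_n \pm (\text{fixed terms})\}$ where the $a_j^{(i)}$ grow with the family index $i$.

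Second, I would verify that this family satisfies the hypotheses of Theorem~\ref{thm:estimate:SW}. Concretely, fix $k$ large, take the $n$ tori $T_1, \dots, T_n \subset E(k)$ as above, and for the $i$-th member perform knot surgery along each $T_j$ using a knot $K_{j}^{(i)}$ (for instance a twist knot or torus knot) whose Alexander polynomial has degree growing strictly with $i$; then set $X_i$ to be the resulting 4-manifold. The basic class set $\mathcal{B}_{X_i}$ is obtained from that of $E(k)$ (which contains $(k-1)[F]$ down to $-(k-1)[F]$ in steps of $2[F]$) by the knot surgery formula, and a sub-collection of its elements has exactly the shape demanded by Definition~\ref{def:n-strongly inequivalent}: the classes $K_j^{(i)} = 2[T_j]$ are primitive and linearly independent (they come from disjoint non-separating tori whose classes are part of a basis after the surgery, since knot surgery along a torus of square $0$ preserves the homology), the coefficients $a_j^{(i)}$ are read off from the top degrees of the Alexander polynomials and hence form strictly increasing sequences in $i$, and $E(k)$ (being a relatively minimal elliptic surface with $b_2^+ > 1$) is of Seiberg--Witten simple type, a property preserved under knot surgery. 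Theorem~\ref{thm:estimate:SW} then gives that infinitely many of the $X_i$ have pairwise distinct adjunction $n$-genera. Since all $X_i$ are homeomorphic to $E(k)$ by Freedman's theorem (same Euler characteristic, signature, type, and simple connectivity), this yields the first assertion.

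Third, to obtain infinitely many such families with pairwise distinct homeomorphism types, I would simply let $k$ range over $\{2, 3, 4, \dots\}$ (or increase $k$ in steps large enough to exceed $n$), performing the same construction inside $E(k)$ for each $k$. The elliptic surfaces $E(k)$ have Euler characteristics $12k$ and signatures $-8k$, hence are pairwise non-homeomorphic, and since knot surgery and log transforms do not change these numerical invariants, the families built over different $k$ consist of manifolds with pairwise distinct homeomorphism types across families. One must only make sure $k$ is large enough that $E(k)$ contains $n$ disjoint cusp neighborhoods and that $b_2^+(E(k)) = 2k - 1 > 1$, which holds for all $k \geq 2$; taking $k$ sufficiently large handles every fixed $n$ uniformly.

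The main obstacle I anticipate is the careful bookkeeping needed to confirm that the relevant basic classes really assemble into the precise algebraic form $S_i = \{\pm a_1^{(i)} K_1^{(i)} \pm \dots \pm a_{m_i}^{(i)} K_{m_i}^{(i)}\}$ with the $n$ distinguished classes primitive and linearly independent and the first $n$ coefficient sequences strictly increasing --- in particular, tracking the effect of performing $n$ simultaneous knot surgeries on the product structure of the Seiberg--Witten invariant (which becomes a product of Alexander polynomials in $n$ variables times the Seiberg--Witten series of $E(k)$), verifying that the classes $2[T_j]$ survive as primitive independent classes in the surgered manifold, and choosing the knots so that the resulting exponents have the monotonicity required by condition (iii) of Definition~\ref{def:n-strongly inequivalent}. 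Everything else is a routine application of the knot surgery formula and Freedman's classification, but this identification of $S_i$ is where the real content lies.
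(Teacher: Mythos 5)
Your proposal follows essentially the same route as the paper: take a closed 4-manifold (e.g.\ $E(k)$) containing $n$ disjoint Gompf nuclei, perform knot surgery along the $n$ cusp-neighborhood tori with knots of growing Alexander-polynomial degree, verify via the knot surgery formula that the basic classes contain pairwise $n$-nicely inequivalent subsets, and conclude with Theorem~\ref{thm:estimate:SW} plus Freedman, varying $k$ to get infinitely many homeomorphism types. The only caveat is a cosmetic one — the primitive classes in Definition~\ref{def:n-strongly inequivalent} should be the Poincar\'e duals of $[T_j]$ with the factor $2$ absorbed into the coefficients $a_j^{(i)}$, not the classes $2[T_j]$ themselves — which you essentially acknowledge in your bookkeeping remarks.
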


\begin{theorem}\label{sec:example:thm:example2}For each positive integer $n$, there exists an infinite family of homeomorphic but pairwise non-diffeomorphic simply connected compact oriented Stein 4-manifolds with pairwise distinct adjunction $n$-genera. Furthermore, there exist infinitely many such families whose homeomorphism types are pairwise distinct. 
\end{theorem}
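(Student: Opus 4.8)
The plan is to deduce Theorem~\ref{sec:example:thm:example2} from the estimate Theorem~\ref{thm:estimate:Stein}. What has to be produced, for each $n$, is an infinite family $\{X_j\}_{j\in\mathbb{N}}$ of pairwise homeomorphic simply connected compact oriented Stein $4$-manifolds together with finite subsets $S_j\subset\mathcal{C}_S(X_j)$ that are pairwise $n$-nicely inequivalent. Once this is in hand, Theorem~\ref{thm:estimate:Stein} yields that infinitely many of the $X_j$ have pairwise distinct adjunction $n$-genera; since the adjunction $n$-genus is a diffeomorphism invariant, those members are automatically pairwise non-diffeomorphic, and they are pairwise homeomorphic by construction, which is the first assertion. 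The ``furthermore'' clause will be handled by a stabilization at the end.

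For the construction I would take as input the known infinite families of homeomorphic but pairwise non-diffeomorphic simply connected compact Stein $4$-manifolds with $b_2=2$ from \cite{AY6} and \cite{Y6} (recalled in Example~\ref{intro:ex}(3)). Fix such a family $\{Y_j\}_{j\in\mathbb{N}}$. Two things must be read off from their explicit Legendrian handlebody descriptions: first, that each $Y_j$ carries a Stein structure $J_j$ with $c_1(Y_j,J_j)=a_j\,K^{(j)}$ for some primitive $K^{(j)}\in H^2(Y_j;\mathbb{Z})$, where the divisibilities $a_j=d(c_1(Y_j,J_j))$ are pairwise distinct, so that after reindexing we may assume $\{a_j\}_j$ strictly increasing; and second, that $-a_j\,K^{(j)}$ is likewise realized as the first Chern class of a Stein structure $J_j'$ on $Y_j$ (replace each Legendrian attaching circle by a Legendrian representative with the same Thurston--Bennequin invariant and the opposite rotation number, which leaves the underlying oriented $4$-manifold unchanged). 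I would then set $X_j=\natural^{\,n}Y_j$, the boundary connected sum of $n$ copies of $Y_j$; this is again simply connected, compact, oriented and Stein, with $H^2(X_j;\mathbb{Z})=\bigoplus_{\ell=1}^{n}H^2(Y_j;\mathbb{Z})$ and $b_2(X_j)=2n$. Writing $K_\ell^{(j)}$ for the image of $K^{(j)}$ from the $\ell$-th summand, the boundary connected sums $J_j^{(\varepsilon_1)}\natural\cdots\natural J_j^{(\varepsilon_n)}$, as $(\varepsilon_1,\dots,\varepsilon_n)$ runs over all sign vectors (with $J_j^{(+)}=J_j$ and $J_j^{(-)}=J_j'$), are Stein structures on $X_j$ with first Chern classes $\sum_\ell \varepsilon_\ell a_j K_\ell^{(j)}$; hence $S_j:=\{\pm a_jK_1^{(j)}\pm a_jK_2^{(j)}\pm\cdots\pm a_jK_n^{(j)}\}\subset\mathcal{C}_S(X_j)$. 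The classes $K_1^{(j)},\dots,K_n^{(j)}$ are primitive and linearly independent and the sequences $\{a_j\}_j$ are strictly increasing, so $\{S_j\}_j$ is pairwise $n$-nicely inequivalent. Since every $Y_j$ is homeomorphic to a fixed $Y_0$, every $X_j$ is homeomorphic to $\natural^{\,n}Y_0$.

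For the ``furthermore'' clause, fix one simply connected compact oriented Stein $4$-manifold $B$ with $b_2(B)=1$ (for instance the Stein $2$-handlebody obtained by attaching a single Weinstein $2$-handle to $B^4$ along a Legendrian knot), and for each $r=0,1,2,\dots$ consider the family $\{X_j\natural(\natural^{\,r}B)\}_{j\in\mathbb{N}}$. Each member is again a simply connected compact oriented Stein $4$-manifold, the members of a fixed family are pairwise homeomorphic, and the sets $S_j$, now viewed inside $H^2(X_j\natural(\natural^{\,r}B);\mathbb{Z})$ with the coefficients on the $r$ new basis classes set to $0$, are still pairwise $n$-nicely inequivalent, so Theorem~\ref{thm:estimate:Stein} again applies. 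Since $b_2(X_j\natural(\natural^{\,r}B))=2n+r$ depends on $r$, these infinitely many families have pairwise distinct homeomorphism types, completing the proof.

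I expect the only non-formal part to be the second paragraph: one must verify, from the handlebody pictures of \cite{AY6} and \cite{Y6}, that their exotic Stein families really do supply first Chern classes of the required shape --- a single primitive class with pairwise distinct (hence, after reindexing, strictly increasing) multiplicity, with the opposite sign also realized by a Stein structure on the same oriented manifold --- and that boundary connected sum adds first Chern classes and splits second cohomology as claimed. After these points are nailed down, the rest is a direct application of Theorem~\ref{thm:estimate:Stein} (and, if one wishes to trade $n$ for $n-1$ anywhere, of Remark~\ref{rem:n-1}).
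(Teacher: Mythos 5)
Your construction is essentially the paper's: it likewise takes the $b_2=2$ exotic Stein families of \cite{AY6}, \cite{Y6}, uses the two Stein structures with first Chern classes $\pm r(p,m)K^p$ ($K^p$ primitive, with divisibility $r(p,m)=p(m_1-1)+m_0-2$ read off from the Legendrian diagrams of \cite{AY7} via Gompf's $c_1$ formula) to realize all sign combinations on the boundary connected sum $\natural_n X^{(m)}_{2p}$, and feeds the resulting pairwise $n$-nicely inequivalent sets into Theorem~\ref{thm:estimate:Stein}. The only difference is cosmetic, in the ``furthermore'' clause: you stabilize by $\natural^r B$ (which works provided $B$ is chosen with a Stein structure of vanishing first Chern class, e.g.\ rotation number zero), whereas the paper obtains distinct homeomorphism types by varying the parameter $m$.
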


The former theorem gives examples of closed 4-manifolds, and the latter gives examples of compact 4-manifolds having boundary. By results in Section~\ref{sec:nonexistence},  Theorems~\ref{intro:thm:twists} and \ref{intro:thm:surgery} follow from these examples. 

\begin{proof}[Proof of Theorems~\ref{intro:thm:twists} and \ref{intro:thm:surgery}] These follow from Theorems~\ref{sec:example:thm:example}, \ref{sec:twist:thm:general} and \ref{thm:genus:surgery:general}. 
\end{proof}

We give examples of Theorem~\ref{sec:example:thm:example} and \ref{sec:example:thm:example2} and then prove Theorems~\ref{intro:thm:embedding:closed} and \ref{intro:thm:embedding:boundary}. In the rest, we use the following notations. For a positive integer $n$, let $E(n)$ be the simply connected elliptic surface with Euler characteristic $12n$ and with no multiple fibers, and let $E(n)_{p,q}$ be the elliptic surface obtained from $E(n)$ by performing logarithmic transformations of multiplicities $p$ and $q$ along two tubular fibers. We denote the Gompf nucleus \cite{G0} of $E(n)$ by $N(n)$, namely, $N(n)$ is the tubular neighborhood of the union of a cusp fiber and a $-n$-section of $E(n)$. 

\subsection{The $n=1$ case of Theorems~\ref{sec:example:thm:example} and \ref{sec:example:thm:example2}}\label{subsec:ex:n=1} We start with the simplest case of Theorems~\ref{sec:example:thm:example} and \ref{sec:example:thm:example2}. We note that the following provides a justification for Example~\ref{intro:ex}. 

\subsubsection{Elliptic surfaces} Any given infinite family of homeomorphic but pairwise non-diffeomorphic simply connected closed elliptic surfaces with $b_2^+>1$ has infinitely members with pairwise distinct adjunction $1$-genera. This can be seen as follows. As is well-known (cf.\ \cite{GS}), any simply connected elliptic surface with $b_2^+>1$ is diffeomorphic to $E(k)_{p,q}$ for some positive integers $k\geq 2$ and $p,q\geq 1$. Since the members of the infinite family are pairwise homeomorphic, the value of $k$ is independent of a member. According to the computation of $SW_{E(k)_{p,q}}$ in \cite{FS1} (see Theorem~3.3.6 in \cite{GS}), each $E(k)_{p,q}$ has a Seiberg-Witten basic class whose divisibility is $kpq-p-q$. Therefore, Theorem~\ref{thm:estimate:SW} shows that infinitely many members have pairwise distinct adjunction $1$-genera. 
\subsubsection{Knot surgery} 
Suppose that a simply connected closed oriented smooth 4-manifold $X$ with $b_2^+>1$ contains a homologically non-trivial torus $T$ of self-intersection $0$ satisfying $\pi_1(X-T)\cong 1$ and has a non-trivial Seiberg-Witten invariant. There are many examples of $X$ such as elliptic surfaces. Let $X_K$ denote the 4-manifold obtained from $X$ by performing a Fintushel-Stern's knot surgery~\cite{FS2} on $T$ using a knot $K$ in $S^3$. Then for any infinite family $\{K_i\}_{i\in \mathbb{N}}$ of knots in $S^3$ whose Alexander polynomials have pairwise distinct degrees, the members of the infinite family $\{X_{K_i}\}_{i\in \mathbb{N}}$ are homeomorphic but pairwise non-diffeomorphic simply connected closed 4-manifolds due to \cite{FS2} and \cite{Su}. We here recall that $K\in \mathcal{B}_X$, if and only if $-K\in \mathcal{B}_X$ (cf.\ Remark~2.4.4.(d) in \cite{GS}). Thus, using the Fintushel-Stern's knot surgery formula (\cite{FS2}. cf.\ \cite{F_knot}), one can check that each $\mathcal{B}_{X_{K_i}}$ has a subset $S_i$ such that the members of $\{S_i\}_{i\in \mathbb{N}}$ are pairwise $1$-nicely inequivalent. Therefore, by Theorem~\ref{thm:estimate:SW}, an infinite subfamily of $\{X_{K_i}\}_{i\in \mathbb{N}}$ is an example of the $n=1$ case of Theorem~\ref{sec:example:thm:example}. 
\subsubsection{Small Stein 4-manifolds}\label{subsec:twist:Stein} The infinite families of homeomorphic but pairwise non-diffeomorphic simply connected compact Stein 4-manifolds with $b_2=2$ obtained in \cite{AY6} and \cite{Y6} have infinite subfamilies that are examples of the $n=1$ case of Theorem~\ref{sec:example:thm:example2}. To prove this claim, we use Stein structures obtained in \cite{AY7}, which are different from the original ones obtained in \cite{AY6} and \cite{Y6}. (The original Stein structures works as well, but those in \cite{AY7} are the simplest regarding our purpose). 

Let $m=(m_0,m_1,m_2)$ be a 3-tuple of integers satisfying $m_0\geq 2$, $m_1\geq 2$, $m_2\geq 1$, and let $X^{(m)}$ and $X^{(m)}_p$ be the 4-manifolds in Figure~\ref{fig:smooth_nuclei}, where $p$ is a positive integer. (These manifolds were introduced in \cite{AY6} and \cite{Y6}, and this figure was used in \cite{AY7}.) 
Then, for each fixed $m$, infinitely many members of $\{X_{2p}^{(m)}\}_{p\in \mathbb{N}}$ are homeomorphic but pairwise non-diffeomorphic Stein 4-manifolds with $b_2=2$ (\cite{AY6}, \cite{Y6}), where the Stein structure of each $X_p^{(m)}$ is the one given by the Stein handlebody diagram in Figure~\ref{fig:Stein_nucleus}. This diagram was obtained in \cite{AY7}, and the boxes are the Legendrian versions of left-handed ful twists shown in Figure~\ref{fig:Legendrian_left_twists}. By the formula of the first Chern classes of Stein 4-manifolds in \cite{G1} together with Lemma~4.1 in \cite{AY7}, we can easily check that the divisibility of $c_1(X_p^{(m)})$ is $p(m_1-1)+m_0-2$.  Hence Corollary~\ref{cor:estimate:Stein} shows that an infinite subfamily of $\{X_{2p}^{(m)}\}_{p\in \mathbb{N}}$ is an example of the $n=1$ case of Theorem~\ref{sec:example:thm:example2}. 

\begin{figure}[h!]
\begin{center}
\includegraphics[width=4.1in]{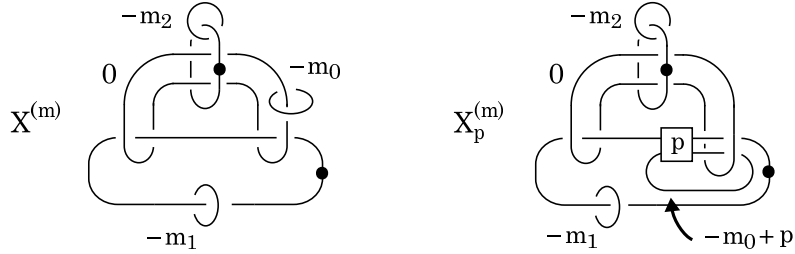}
\caption{$X^{(m)}$ and $X^{(m)}_p$ ($m_0\geq 2$,\, $m_1\geq 2$,\, $m_2\geq 1$,\, $p\geq 1$).}
\label{fig:smooth_nuclei}
\end{center}
\end{figure}

\begin{figure}[h!]
\begin{center}
\includegraphics[width=3.7in]{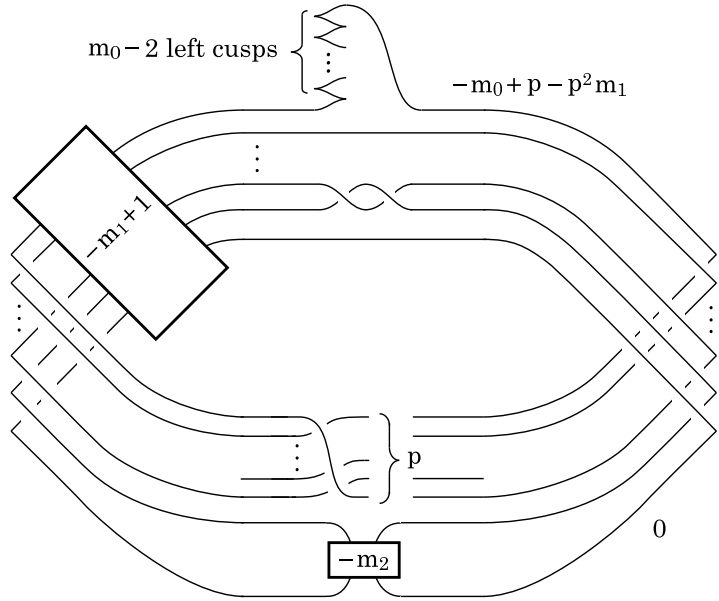}
\caption{A Stein handlebody diagram of $X_p^{(m)}$}
\label{fig:Stein_nucleus}
\end{center}
\end{figure}

\begin{figure}[h!]
\begin{center}
\includegraphics[width=3.7in]{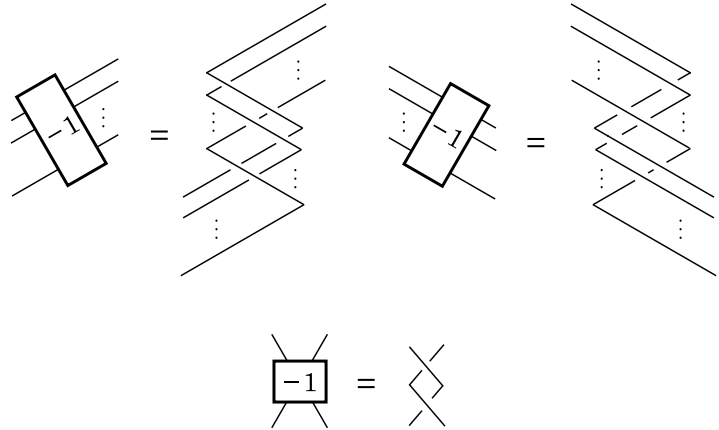}
\caption{Legendrian versions of left-handed full twists}
\label{fig:Legendrian_left_twists}
\end{center}
\end{figure}
\subsection{The general case of Theorems~\ref{sec:example:thm:example} and \ref{sec:example:thm:example2}}\label{subsec:ex:general}
Next we give general examples of Theorems~\ref{sec:example:thm:example} and \ref{sec:example:thm:example2} and then prove Theorems~\ref{intro:thm:embedding:closed} and \ref{intro:thm:embedding:boundary}. In the rest, let $n$ be a positive integer. 
\subsubsection{Knot surgery}\label{sec:ex:subsec:general_knot}
 Suppose that $X$ is a simply connected closed oriented smooth 4-manifold with $b_2^+>1$ that has a non-trivial Seiberg-Witten invariant and contains pairwise disjoint $n$ copies of the Gompf nucleus $N(2)$. For example, the elliptic surface $E(k)$ $(k\geq 2)$ contains pairwise disjoint $2(k-1)$ copies of the Gompf nucleus $N(2)$ (see Exercises 3.1.12.(c) in \cite{GS}). One can also construct many examples of $X$, since the boundary connected sum $\natural_{n} N(2)$ admits a Stein structure, and any compact Stein 4-manifold can be embedded into a simply connected closed minimal symplectic 4-manifold with $b_2^+>1$ (\cite{AO3}). 
 
For each $1\leq i\leq n$, let $T_i$ be a regular fiber of a cusp neighborhood of the $i$-th copy of $N(2)$ embedded in $X$. Then each $T_i$ is a torus with the self-intersection number $0$, and $T_1,T_2,\dots, T_n$ represent linearly independent second homology classes of $X$. For an infinite family $\{K_i\}_{i\in \mathbb{N}}$ of knots in $S^3$ whose Alexander polynomials have pairwise distinct degrees, let $X_i$ be the 4-manifold obtained from $X$ by performing knot surgeries on the tori $T_1,\dots, T_n$ using the knot $K_i$. Then each $X_i$ is homeomorphic to $X$. 
 On the other hand, due to the knot surgery formula (\cite{FS2}), one can see that each $\mathcal{B}_{X_i}$ has a subset $S_i$ such that the members of $\{S_{i}\}_{i\in \mathbb{N}}$ are pairwise $n$-nicely inequivalent. Therefore,  Theorem~\ref{thm:estimate:SW} shows that an infinite subfamily of $\{X_i\}_{i\in \mathbb{N}}$ is an example of Theorem~\ref{sec:example:thm:example}. We note that $b_2(X_i)=12n+10$ in the case where $X=E(n+1)$. 
 
\subsubsection{Stein 4-manifolds}\label{sec:ex:subsec:general_Stein} We use the notation in Section~\ref{subsec:twist:Stein}, and we fix $m$. We here observe that $X_p^{(m)}$ admits (at least) two Stein structures: one is given by the Stein handlebody diagram in Figure~\ref{fig:Stein_nucleus}, and the other is given by the diagram in Figure~\ref{fig:Stein_nucleus_right}. We easily see that the first Chern classes of these two Stein structures are $r(p,m)K^p$ and $-r(p,m)K^p$ for some primitive second cohomology class $K^p$ of $X_p^{(m)}$, where $r(p,m):=p(m_1-1)+m_0-2$. Now let $Z_{n,2p}$ be the boundary connected sum $\natural_n X_{2p}^{(m)}$. Then the members of $\{Z_{n,2p}\}_{p\in \mathbb{N}}$ are pairwise homeomorphic. By the above observation, we see that $\mathcal{C}_S(X_p^{(m)})$ has the subset $S_p:=\{r(p,m)(\pm K_1^p\pm K_2^p\pm \dots \pm K_n^p)\}$, where each $K_i^p$ is the class of $H^2(Z_{n,2p};\mathbb{Z})\cong \oplus_{n}H^2(X_{2p}^{(m)};\mathbb{Z})$ corresponding to $K^p$ in the $i$-th copy of $H^2(X_{2p}^{(m)};\mathbb{Z})$. Hence the members of $\{S_{p}\}_{p\in \mathbb{N}}$ are pairwise $n$-nicely inequivalent. Thus by Theorem~\ref{thm:estimate:Stein}, we see that an infinite subfamily of $\{Z_{n,2p}\}_{p\in \mathbb{N}}$ is an example of Theorem~\ref{sec:example:thm:example2}. We note that $b_2(Z_{n,2p})=2n$. 

\begin{figure}[h!]
\begin{center}
\includegraphics[width=3.7in]{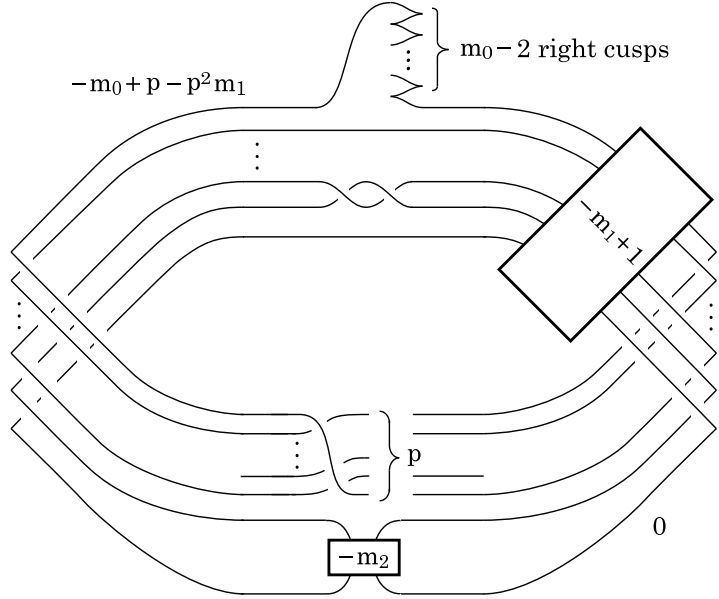}
\caption{Another Stein handlebody diagram of $X_p^{(m)}$}
\label{fig:Stein_nucleus_right}
\end{center}
\end{figure}

\subsection{Proofs of Theorems~\ref{intro:thm:embedding:closed} and \ref{intro:thm:embedding:boundary}} Lastly we prove Theorems~\ref{intro:thm:embedding:closed} and \ref{intro:thm:embedding:boundary} by using the above examples. These theorems immediately follow from the examples in Subsections~\ref{sec:ex:subsec:general_knot} and \ref{sec:ex:subsec:general_Stein} and Theorem~\ref{thm:genus:twist:embedding}. 
\subsection*{Acknowledgements}The author would like to thank Selman Akbulut and Motoo Tange for useful discussions and helpful comments. The author would also like to thank the referee for useful comments. The author was partially supported by JSPS KAKENHI Grant Numbers 16K17593, 26287013 and 17K05220. 


\end{document}